\documentclass[11pt, reqno]{amsart}
\usepackage{amsmath, amsthm, amscd, amsfonts, amssymb, graphicx, color, mathrsfs}
\usepackage[bookmarksnumbered, colorlinks, plainpages]{hyperref}
\usepackage{mathtools}
\usepackage[all]{xy}
\usepackage[utf8]{inputenc}
\usepackage{babel}
\usepackage{amstext}
\usepackage{amsthm}
\usepackage{xcolor}
\usepackage{amssymb}
\usepackage{enumitem}
\usepackage{setspace}
\usepackage{mathrsfs}
\usepackage{comment}
\usepackage{dsfont}

\usepackage{lipsum}                                       
\usepackage{xargs}  

\usepackage{cite}
\renewcommand\eqref[1]{(\ref{#1})}

\newcounter{example}[section]
\newenvironment{example}[1][]
{\refstepcounter{example}\par\medskip
\noindent\textbf{Example~\theexample. #1} \rmfamily}{\medskip}

\usepackage{mathtools}
\mathtoolsset{showonlyrefs}

\textheight 22.5truecm \textwidth 14.5truecm
\setlength{\oddsidemargin}{0.35in}\setlength{\evensidemargin}{0.35in}

\setlength{\topmargin}{-.5cm}

\newtheorem{theorem}{Theorem}[section]
\newtheorem{lemma}[theorem]{Lemma}

\newtheorem{proposition}[theorem]{Proposition}
\newtheorem{corollary}[theorem]{Corollary}
\theoremstyle{definition}
\newtheorem{definition}[theorem]{Definition}

\theoremstyle{remark}
\newtheorem{remark}[theorem]{Remark}
\numberwithin{equation}{section}

\def\mg{{\mathbb{G}}}

\newcommand{\vertiii}[1]{{\left\vert\kern-0.25ex\left\vert\kern-0.25ex\left\vert #1 
    \right\vert\kern-0.25ex\right\vert\kern-0.25ex\right\vert}}
\allowdisplaybreaks

\begin{document}
	\setcounter{page}{1}

\title[Poincar\'{e} inequalities on Carnot Groups]{Poincar\'{e} inequalities on Carnot Groups and spectral gap of Schr\"{o}dinger operators}

	\author[M. Chatzakou]{Marianna Chatzakou}
	\address{
		Marianna Chatzakou:
		\endgraf
		Department of Mathematics: Analysis, logic and discrete mathematics
		\endgraf
		Ghent University
		\endgraf
		Krijgslaan 281, building s8, B 9000 Ghent 
        \endgraf
         Belgium
		\endgraf
		{\it E-mail address} {\rm marianna.chatzakou@ugent.be}
		}

	\author[S. Federico]{Serena Federico}
    \thanks{S.Federico is a member of GNAMPA}
	\address{
		Serena Federico:
		\endgraf
		Department of Mathematics
		\endgraf
		University of Bologna
		\endgraf
		piazza di Porta San Donato 5, 40126, Bologna
	\endgraf
		Italy
		\endgraf
		{\it E-mail address} {\rm serena.federico2@unibo.it}}

			\author[B. Zegarlinski]{Boguslaw Zegarlinski}
	\address{
		Boguslaw Zegarlinski:
		\endgraf
		Institut de Mathématiques de Toulouse ; UMR5219
		\endgraf
		Universit{\'e} de Toulouse ; CNRS
		\endgraf
		118 Route de Narbonne, F-31062 Toulouse Cedex 9
        \endgraf
         France
		\endgraf
		{\it E-mail address} {\rm b.zegarlinski@math.univ-toulouse.fr}
		}
		\endgraf

\thanks{We wish to thank Michael Ruzhansky for fruitful discussions and suggestions. It is our pleasure to thank Bernard Helffer whose valuable comments certainly improved the quality of this work.}

   \keywords{Poincar\'e inequalities; Carnot groups; sub-gradient; spectral gap.}
     \subjclass[2020]{35R03; 35A23; 26D10}
	
\begin{abstract} In this work we give a sufficient condition under which the global Poincar\'{e} inequality on Carnot groups holds true for a large family of probability measures absolutely continuous with respect to the Lebesgue measure. Additionally, we show that the global Poincar\'{e} inequality holds true on any Carnot group for a certain choice of a probability measure adapted to the structure of each Carnot group, and whose formula is explicitly given. Consequently, we extend the results of the previous work (M. Chatzakou, S. Federico, and B. Zegarlinski, \textit{$q$-Poincar\'e inequalities on Carnot groups with a filiform Lie algebra}, Potential Anal. 60 (2024)) targeted on filiform Carnot groups to any Carnot group. As a result, the Schr\"{o}dinger operators associated with the density of the considered probability measure have a spectral gap.
	
\end{abstract} \maketitle
	
	
\section{Introduction}\label{Intro}
\nocite{*}
\subsection{H\"ormander sum of squares and Poincar\'e inequalities}
The main question that we address in this work is the determination of the sufficient conditions that the density of a probability measure $\mu$ on a Carnot group $\mathbb{G}$ must satisfy in order to formulate the so-called \textit{``global Poincar\'e''} or $q$-Poincar\'e inequality in this setting, that is an inequality of the form \eqref{def.gl.p.i}. Another aspect of this investigation is whether we can construct a homogeneous quasi-norm giving rise to the global Poincar\'e inequality on any stratified group. The validity of the Poincar\'e inequality will, in turn, allow to decide whether a certain type of Schr\"odinger operator on $\mathbb{G}$ has a spectral gap. 

The Schr\"odinger-type operators we are referring to are defined via a system of vector fields $\mathbb{X}=\{X_1,\cdots,X_{n}\}$ on $\mathbb{R}^n$ that  satisfies H\"ormander's condition, that is operators that take the form 
\begin{equation}\label{Lform}
-\sum_{i=1}^{n}X_{i}^{2}+V(x)\,,\quad x \in \mathbb{R}^n\,,
\end{equation}
where $V$ is a suitable potential. 

In the present paper the operator $\sum_{i=1}^{n}X_{i}^{2}$ is viewed as the sub-Laplacian of a certain Carnot group. However, not all operators of this form – that is those with vector fields satisfying H\"ormander's condition – are necessarily sub-Laplacians of a Carnot group, although they can always be ``approximated'' by sub-Laplacians on a Carnot group of higher topological dimension $N>n$. The latter is, roughly speaking,  the main idea of the celebrated lifting theorem by Rothschild and Stein \cite{RS76}. 
 
 \par We recall that if $\mathbb{G}\equiv \mathbb{R}^n$ is a group with a Lie algebra that can be generated by vector fields $X_j$, $1<j\leq n_1 \leq n$, using a finite number of repeated commutators, then the negative operator of the form
\[
\Delta_{\mathbb{G}}=\sum_{i=1}^{n_1}X_{i}^{2}\,,
\]
is called the \textit{sub-Laplacian} on $\mathbb{G}$, where $n_1$ is the dimension of the first stratum of the Carnot group. In this case, the system $\mathbb{X}$ is the system of generators of the Lie algebra of $\mathbb{G}$. Such operators, thanks to H\"{o}rmander's celebrated result \cite{Hor67}, are the most typical examples of operators that are not elliptic but hypoelliptic. We recall that when the group structure is not available (i.e. when elements of $\mathbb{X}$ are just vector fields on $\mathbb{R}^n$), the system $\mathbb{X}$ that satisfies H\"ormander's condition consists of   (possibly degenerate, non-commutative) vector fields on $\mathbb{R}^n$ such that 
\[
{\rm rank}({\rm Lie}\{\mathbb{X}\}(x))=n\,, \quad x \in \mathbb{R}^n\,,
\]
where ${\rm Lie}\{\mathbb{X}\}$ is the smallest Lie algebra that can be generated by the system $\mathbb{X}$ using repeated commutators.

 The first pioneering result about Poincaré inequalities involving H\"{o}rmander's vector fields was proved by Jerison in \cite{Jer86}. For the reader's convenience, we recall the statement of the aforementioned inequality in Theorem \ref{thm.jerison} below.  
	
\begin{theorem}
	\label{thm.jerison}
Let $\mathbb G$ be any nilpotent Lie group with the Haar measure $dx$. For any $p \in [1,\infty)$ there exists a constant $P_{0}(r)=P_{0}(r,p)$ such that, for every $x\in \mathbb{G}$ and $f \in C^{\infty}(B_r(x))$,
	\begin{equation}\label{Jer.ineq}
	\int_{B_{r}(x)} |f(y)-f_{B_{r}(x)}|^{p}\,dy \leq P_{0}(r) \int_{B_{r}(x)} |\nabla_{\mathbb G}f(y)|^p\,dy\, ,
	\end{equation}
	where $f_{B_{r}(x)}:= \frac{1}{|B_{r}(x)|}\int_{B_{r}(x)}f(y)\,dy$  and $B_{r}(x):=\{y \in \mathbb G : d(x,y) \leq r \}$ is the ball of radius $r$ centered at $x$ with respect to the  Carnot-Carath\'{e}odory distance $d$. 
\end{theorem} 
We shall refer to \eqref{Jer.ineq} as the {\it local} Poincar\'e inequality, since it holds locally on a ball of radius $r$. 

The Poincaré inequalities involving H\"{o}rmander's vector fields that we are interested in in this paper are the {\it global} Poincar\'e inequalities of the form \eqref{Jer.ineq}, where $B_r(x)$ and the Haar measure on $\mathbb{G}$ (which in this case coincides with the Lebesgue measure) are replaced by the whole $\mathbb{G}$ and by a suitable probability measure on $\mathbb{G}$, respectively.   To be precise, for a suitable probability measure such global Poincar\'e inequality takes the form 
\begin{equation}
    \label{def.gl.p.i}
    \mu(|f-\mu(f)|^q) \leq C \mu (|\nabla_{\mathbb{G}}f|^q)\,,\quad q\geq 1,
\end{equation}
for all functions $f$ for which the right hand side is well defined. Here and later on $\mu$ will be a probability measure on a Carnot group $\mathbb{G}$, $\mu(f)$ will denote the integral $f_\mu=\int_{\mathbb{G}}f\,d\mu$, and $\nabla_{\mathbb{G}}$ will be the vector-valued operator $\nabla_{\mathbb{G}}=(X_1,\cdots,X_{n_1})$ which is the sub-gradient on the group $\mathbb{G}$.

The interest for these inequalities is strongly motivated by the fact that when $q=2$ then \eqref{def.gl.p.i} can be written as 
\begin{equation}
    \label{def.spec.gap}
    \mu(|f-\mu(f)|^2)\leq C \mu (f(\mathcal{L} f))\,,
\end{equation}
 for $\mathcal{L}$ as in \eqref{sch.par.case}, which is referred to as the \textit{spectral gap inequality}. The significance of this estimates lies in the fact that, whenever it holds, it implies the existence of a gap at the bottom of the spectrum of the operator $\mathcal{L}$.  Finally, we remark that estimates of the form \eqref{def.spec.gap}  go back to Henri Poincar\'{e} and imply 
 the exponential convergence of the associated  semigroup $P_t\equiv e^{t\mathcal{L}}$ to the invariant measure $\mu$, see \cite{Roc01}, \cite{CGR10}, \cite{GZ03}.
 
\subsection{Operators with discrete spectra on Carnot groups and the $U$-bounds method}

As previously mentioned there is a  link between functional inequalities and spectral properties of Schr\"odinger operators. In the work \cite{DM05}, using a probabilistic approach, Driver and Melcher show that the heat kernel measure in the Heisenberg group satisfies a spectral gap inequality. Non-probabilistic approaches to the problem were considered in \cite{El09} and \cite{Li06}. 
As for the investigation of the spectra of other classes of operators in relation to functional inequalities, we refer to the recent works of Cipriani \cite{Cip00} and Wang \cite{Wan02}.

\par  Another approach to prove coercive inequalities related to spectral problems was developed by Hebisch and Zegarlinski in \cite{HZ10}. 
Their method, called the \textit{$U$-bounds method}, works on a general metric space $M$ equipped with the (non-)commuting vector fields $\{X_1,\cdots,X_m\}$.  More specifically, the \textit{$U$-bounds method} consists in proving some inequalities, also called \textit{$U$-bounds}, allowing to derive Poincaré and other inequalities with respect to suitable probability measures.
 
 Precisely, in the Carnot group setting, a  \textit{$U$-bound} is  expressed as
 \begin{equation}
     \label{Ubounds.def}
     \int |f|^q g(d)\,d\mu \leq A_q   \int |\nabla_{\mathbb{G}} f|^q\,d\mu+B_q \int |f|^q\,d\mu\,,
 \end{equation}
 where $g$ is a positive unbounded function on $[0,+\infty)$ (that goes to infinity as $d$ goes to infinity),    $d\mu=e^{-U(d)}\,dx$ is a probability measure defined via a suitable unbounded function $U$, and $dx$ denotes the Haar-measure on the group.
 
 In \cite{HZ10} it is shown that, under suitable conditions on $\mu$, one can pass from the \textit{$U$-bounds} inequalities \eqref{Ubounds.def} to the global (that is with respect to a probability measure) $q$-Poincaré inequalities. This method was successfully applied in the case of step 2 groups in \cite{HZ10}, \cite{Ing10}, \cite{BDZ21a},\cite{IKZ11}, \cite{BDZ22} and \cite{CFZ24}, and on Carnot groups of higher step in \cite{BDZ21b}. In particular, this is also the approach we shall use in the current paper.

 \par  The type of relation between spectral analysis and coercive inequalities that we are going to investigate here is as follows:
 \medskip
 
 For $(M,\mu)$ a probability space, and $\mathcal{L}$ a positive self-adjoint operator on $\mathcal{D}(\mathcal{L}) \subset L^2(\mu)$, the operator $\mathcal{L}$ has a spectral gap if and only if there exists a constant $C>0$ so that 
 \begin{equation}
     \label{dir.form}
      \mu(|f-\mu(f)|^2)\leq C \mathcal{E}(f,f)\,,
 \end{equation}
 where   $(\mathcal{E},\mathcal{D}(\mathcal{E}))$ is the Dirichlet form associated to $\mathcal{L}$, that is the closure of the form
 $$\mathcal{E}(f,g)=\mu(f\mathcal{L}g),\quad f,g\in\mathcal{D}(\mathcal{L}). $$ 

Now, for $U$ as in \eqref{Ubounds.def}, that is $U$ being an unbounded function such that $d\mu=e^{-U(d)}dx$ is a probability measure, by taking 
\begin{equation}\label{sch.par.case}
    \mathcal{L}=-\Delta_{\mathbb{G}}+\nabla_{\mathbb{G}}U\cdot \nabla_{\mathbb{G}}\, ,
\end{equation} 
we get
$$\mathcal{E}(f,f)=\mu(f\mathcal{L}f)=\mu (|\nabla_Gf|^2).$$
At the same time, we know that if an operator $\mathcal{L}$  satisfies the relation 
\[
(\mathcal{L} f,f)_{L^2(\mu)}=\int_{\mathbb{G}}|\nabla_{\mathbb{G}}f(x)|^2\,d\mu(x)\,,
\]
then it is an operator of the form \eqref{Lform} which is positive and self-adjoint on $L^2(\mu)$. 
Therefore, by the previous analysis, \eqref{dir.form} becomes
\begin{equation}
    \label{poincGap}
\mu(|f-\mu(f)|^2)\leq C \mu (|\nabla_Gf|^2),\quad f\in\mathcal{D}(\mathcal{L}),
\end{equation}
for $q=2$, where inequality  \eqref{poincGap} is equivalent to the existence of a spectral gap for the operator $\mathcal{L}$ in  \eqref{sch.par.case}.

Let us stress that the $U$-{\it bounds} method was introduced to deal with the validity of coercive inequalities in non doubling measure spaces, which is also the setting considered in this paper. In the non-doubling setting, a global $L^2-L^2$ Poincaré inequality on connected non-compact Lie
groups was proved in \cite{BPV} in the possibly non-unimodular case, and in \cite{RS} in the unimodular case. In both \cite{BPV} and \cite{RS} the result is based on the existence of a Lyapunov function – an approach different from the one we use here.  As for Poincaré inequalities in doubling metric measure spaces, several results have been obtained so far, and we refer the interested reader to \cite{BL18}, \cite{E19} and references therein.

\subsection{The class of probability measures for which the spectral gap inequality holds true} The quadratic form bounds  in \cite{HZ10}, that is the $U$-bounds in \eqref{Ubounds.def} for $q=2$, resemble the ones in the works of Rosen \cite{Ros76} and Adams \cite{Ada79} in the Euclidean setting. In the case of the Heisenberg group (see \cite{HZ10}), the potential $U$ in the probability measure is taken with respect to the Carnot-Carath\'{e}odory distance. In this consideration, several coercive inequalities, including the Log-Sobolev inequality, are proven.  However, in the same work, the authors prove that replacing the Carnot-Carath\'{e}odory distance by any other smooth homogeneous quasi-norm forces the Log-Sobolev inequality to fail. 
\par In the case of the spectral gap inequality that is of interest to us, the potential is given with respect to a homogeneous quasi-norm on the group. The control over the potential that is present in the $U$-bounds is granted by the lower bound of the length of the sub-gradient of the quasi-norm. In general,  it is the control over the potential $U$ that is needed for such types of inequalities; see also [Corollary 4.2.4 \cite{Ing10}] where it was proved that, as in the Euclidean setting,  a Schr\"{o}dinger operator with potential $V$ as in \eqref{sch.par.case} has a discrete spectrum if $V$ grows to infinity in all directions.  We wish to mention that results on the discreteness of the spectrum of operators as in \eqref{sch.par.case} in the case of H-type groups can be found in \cite{I12}, while for similar results when $\mathbb{G}$ is a Métivier group we refer to in \cite{BC17}. Let us say that in the aforementioned papers the discreteness of the spectrum is obtained by considering a potential defined in terms of a norm $N$ on $\mathbb{G}$ being either the Carnot-Charathéodory
norm or the Kaplan norm, while in \cite{BC17} $N$ is the Kaplan norm.
 
\subsection{Main result and spectral gap}
The main results of this work are contained in Theorems \ref{thm.SuffCond2} and \ref{THM:hypothesis}  below. Particularly, in Theorem \ref{thm.SuffCond2} we give sufficient conditions on the probability measure so that global Poincaré inequalities with respect to that measure hold true, while in Theorem \ref{THM:hypothesis} we show that global Poincar\'e inequalities on a Carnot group $\mathbb G$ always hold for a particular choice of a quasi-norm on $\mathbb G$.

\begin{theorem}\label{thm.SuffCond2}
Let $\mathbb{G}$ be a Carnot group of step $r$ on $\mathbb{R}^n$, let $N$ be a homogeneous quasi-norm on $\mathbb{G}$ smooth away from the origin, and let $\mu_p$ be defined as in \eqref{def.measure}. If there exists an index $j_0\in \{1,\ldots, n_1\}$, and a positive integer $\gamma \geq 2$, such that
\begin{equation}\label{grad-condition2}
    |\nabla_\mathbb{G} N(x)| \gtrsim \,\frac{|x_{j_0}|^{\gamma-1}}{N^{\gamma-1}(x)}\,,\quad \forall x\in\mathbb{G},
\end{equation}
  then,  for all $p\geq 2\gamma$, and for $q$ being the conjugate exponent of $p$, there exists $c_0\in(0,\infty)$ such that
\begin{equation}
    \label{qPoinc2}
\mu_p(|f-f_{\mu_p}|^q)\leq c_0\, \mu_p(|\nabla_{\mathbb{G}} f|^q)
\end{equation}
for all functions $f$ which are Lipschitz (with respect to the Carnot-Carath\'eodory distance).
\end{theorem}
\smallskip

A direct consequence of this result is the spectral gap for a suitable operator $\mathcal{L}_p$.
\begin{corollary}\label{cor.spectralgap}
    Under the hypotheses of Theorem \ref{thm.SuffCond2} the positive self-adjoint operator 
    \[
\mathcal{L}_p:=-\Delta_{\mathbb{G}}+apN^{p-1}\nabla_{\mathbb{G}}N\cdot\nabla_{\mathbb{G}}\,,
    \]
    on $L^2(\mu_p)$ has a spectral gap.
\end{corollary}

\begin{theorem}\label{THM:hypothesis}
   Let $\mathbb{G}$ be a Carnot group  on $\mathbb{R}^n$. Then for $N, \gamma$ as in \eqref{smoothN}, $\mu_p$ as in \eqref{def.measure} with $p\geq 2\gamma$, and for $q$ being the conjugate exponent of $p$, there exists $c_0\in(0,\infty)$ such that
\begin{equation}
    \label{qPoinc2}
\mu_p(|f-f_{\mu_p}|^q)\leq c_0\, \mu_p(|\nabla_{\mathbb{G}} f|^q)
\end{equation}
for all functions $f$ which are Lipschitz (with respect to the Carnot-Carath\'eodory distance). Consequently, also the corresponding  operator $\mathcal L_p$, as in Corollary \ref{cor.spectralgap}, has a spectral gap.
\end{theorem}

The general result in Theorem \ref{THM:hypothesis} is a combination of Theorem \ref{thm.SuffCond2} together with a result by Helffer and Nourrigat in \cite{HN85}, where the authors show that there always exists an admissible change of coordinates in $\mathbb G$ so that the vector fields that generate the corresponding Lie algebra admit a particular form. After reducing to this form, one can show that the lower bound \eqref{grad-condition2} is satisfied, allowing global Poincar\'e inequalities to hold as an application of Theorem \ref{thm.SuffCond2}.

\subsection{Organisation of the paper} The paper is organised as follows: in Section \ref{Section1} we provide the reader with the necessary notions around Carnot groups. In Section \ref{section.SufficientCond} we prove Theorem \ref{thm.SuffCond2} about the existence of sufficient conditions for the global Poincar\'{e}/spectral gap inequalities to hold. Those sufficient conditions are related to the group structure and/or to the choice of the homogeneous quasi-norm, that is, in other words, to the choice of the probability measure. Consequently, the spectral analysis for the self-adjoint operators $\mathcal{L}_p$ follows as an immediate application of Corollary \ref{cor.spectralgap}.  The analysis in this section includes that in \cite{CFZ24} as a special case. In particular, from Theorem \ref{thm.SuffCond2} it follows that the global Poincar\'{e} inequality holds true for every member of the family of the Carnot groups studied in \cite{CFZ24} whose Lie algebra is of filiform type.   Finally, in Section \ref{section.Examples} we give other examples of groups where the sufficient conditions described earlier are satisfied. These include many important examples of Carnot groups that are frequently studied in the literature. Most importantly, in Section \ref{section.Examples} we prove our main general result, i.e., the existence of a quasi-norm on any Carnot group $\mathbb G$ so that the global Poincar\'e inequality holds true.

\section{Preliminaries}\label{Section1}
In this section we shall recall some properties of the Lie algebra of a Carnot group $\mathbb{G}$. The homogeneous structure of Carnot groups plays a fundamental role in our analysis, therefore we start by recalling the definition of homogeneous Lie groups, homogeneous functions, and homogeneous differential operators.

\begin{definition}[Homogeneous Lie group on $\mathbb{R}^n$]
\label{def.HomG}
Let $\mathbb{G}=(\mathbb{R}^n,\circ)$ be a Lie group on $\mathbb{R}^n$. We say that $\mathbb{G}$ is a {\it homogeneous} Lie group (on $\mathbb{R}^n$) if there exists an $n-tuple$ of positive integers $\sigma=(\sigma_1,\ldots, \sigma_n)$, with $1=\sigma_1\leq\ldots \leq \sigma_n$, such that the dilation
$$\delta_\lambda: \mathbb{R}^n \rightarrow \mathbb{R}^n,\quad
\delta_\lambda(x_1,\ldots,x_n):=(\lambda^{\sigma_1}x_1,\ldots, \lambda^{\sigma_n}x_n)$$
is an automorphism of the group $\mathbb{G}$ for any $\lambda>0$.
We shall denote by $\mathbb{G}=(\mathbb{R}^n,\circ,\delta_\lambda)$ the datum of a homogeneous Lie group on $\mathbb{R}^n$, where $\circ$ is the composition law and $\{\delta_\lambda\}_{\lambda>0}$ is the dilation group.
\end{definition}

\begin{definition}[$\mathbb{G}$-length of a multi-index]
Given a homogeneous Lie group $\mathbb{G}=(\mathbb{R}^n,\circ,\delta_\lambda)$ and a multi-index $\alpha=(\alpha_1,\ldots,\alpha_n)\in(\mathbb{N}\cup\{0\} )^n$, we define the $\delta_\lambda$-length of $\alpha$ as
$$|\alpha|_{\mathbb{G}}:=\langle \alpha,\sigma\rangle=\sum_{i=1}^n \alpha_i \sigma_i\,,$$
where $\sigma_i$ are as in Definition \ref{def.HomG}.
\end{definition}


\begin{definition}[$\delta_\lambda$-homogeneous function of degree $m$]
Let $a$ be a real function on $\mathbb{G}=(\mathbb{R}^n,\circ,\delta_\lambda)$, we say that $a$ is a $\delta_\lambda$-homogeneous function of degree $m\in\mathbb{R}$ if $a\not\equiv 0$ and for any $x\in \mathbb{G}\setminus 0$ and $\lambda>0$, 
$$ a(\delta_\lambda(x))=\lambda^m a(x).$$
\end{definition}

\begin{definition}[$\mathbb{G}$-degree of a polynomial function]
Given a homogeneous Lie group $\mathbb{G}=(\mathbb{R}^n,\circ, \delta_\lambda)$, and a polynomial function $p:\mathbb{G}\rightarrow \mathbb{R}$ defined (as a {\it finite} sum) as follows
$$p(x)=\sum_\alpha c_\alpha x^\alpha,\quad c_\alpha\in\mathbb{R},$$
we shall call the $\mathbb{G}$-degree of $p$ the quantity
$$\mathrm{deg}_{\mathbb{G}}(p):=\mathrm{max}\{|\alpha|_{\mathbb{G}}: c_\alpha\neq 0\}.$$
\end{definition}

\begin{proposition}[Smooth $\delta_\lambda$-homogeneous functions]\label{prop.smooth.hom}
Let $\mathbb{G}=(\mathbb{R}^n,\circ,\delta_\lambda)$ and $a\in C^\infty(\mathbb{G};\mathbb{R})$. Then $a$ is  $\delta_\lambda$-homogeneous of degree $m$ if and only if it is a polynomial function. As a consequence, the set of the degrees of the smooth (non-vanishing) $\delta_\lambda$-homogeneous functions is
$$\mathcal{A}=\{|\alpha|_\mathbb{G}:\alpha\in (\mathbb{N}\cup\{0\} )^n\}, \quad |\alpha|_\mathbb{G}:=\sum_{i=1}^n\sigma_i \alpha_i,$$
with $|\alpha|_\mathbb{G}=0$ if and only if $a$ is constant.
\end{proposition}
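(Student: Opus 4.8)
The plan is to prove the two implications separately and then read off the description of the degree set $\mathcal{A}$. The converse direction is immediate: evaluating the monomial $x^\alpha = x_1^{\alpha_1}\cdots x_n^{\alpha_n}$ at $\delta_\lambda x$ gives $\prod_{i=1}^n (\lambda^{\sigma_i} x_i)^{\alpha_i} = \lambda^{|\alpha|_\sigma} x^\alpha$, so every monomial is $\delta_\lambda$-homogeneous of degree $|\alpha|_\sigma$, and any finite linear combination of monomials all of the same $\sigma$-degree $m$ is a smooth $\delta_\lambda$-homogeneous function of degree $m$. In particular every value $|\alpha|_\sigma$ is realized as the degree of a smooth nonvanishing homogeneous function, and since the $\sigma_i$ are positive integers one has $|\alpha|_\sigma = 0$ precisely when $\alpha = 0$, i.e. precisely for the constants.

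For the substantive implication, suppose $a \in C^\infty(\mathbb{R}^n;\mathbb{R})$ satisfies $a(\delta_\lambda x) = \lambda^m a(x)$ for all $x$ and all $\lambda > 0$. First I would record a differentiation identity: applying $\partial_{x_j}$ to both sides and using the chain rule (the $j$-th coordinate of $\delta_\lambda x$ carries the factor $\lambda^{\sigma_j}$), one obtains $\lambda^{\sigma_j}(\partial_j a)(\delta_\lambda x) = \lambda^m (\partial_j a)(x)$; iterating shows that $\partial^\alpha a$ is smooth and $\delta_\lambda$-homogeneous of degree $m - |\alpha|_\sigma$ for every multi-index $\alpha$. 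The second ingredient is a vanishing lemma: \textbf{a smooth function $b$ that is $\delta_\lambda$-homogeneous of a negative degree $d<0$ vanishes identically.} Indeed, fixing $x$ and letting $\lambda \to 0^+$ forces $\delta_\lambda x \to 0$ coordinatewise (as each $\sigma_i>0$), so $b(\delta_\lambda x)\to b(0)$ by continuity, whereas $\lambda^d b(x)$ is unbounded unless $b(x)=0$; hence $b\equiv 0$.

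Combining these, I would choose an integer $N>m$. Since $\sigma_i\geq 1$ we have $|\alpha|_\sigma=\sum_i\sigma_i\alpha_i\geq\sum_i\alpha_i=|\alpha|$, so any multi-index of ordinary order $|\alpha|=N$ satisfies $m-|\alpha|_\sigma\leq m-N<0$. By the differentiation identity $\partial^\alpha a$ is homogeneous of this negative degree, and the vanishing lemma gives $\partial^\alpha a\equiv 0$ for all $|\alpha|=N$. A $C^\infty$ function all of whose order-$N$ partial derivatives vanish identically equals its degree-$(N-1)$ Taylor polynomial (the integral form of the Taylor remainder involves exactly these vanishing derivatives), so $a$ is a polynomial. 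Writing $a=\sum_\alpha c_\alpha x^\alpha$ and substituting into $a(\delta_\lambda x)=\lambda^m a(x)$ gives, by linear independence of the monomials, $c_\alpha(\lambda^{|\alpha|_\sigma}-\lambda^m)=0$ for all $\lambda>0$; thus $c_\alpha\neq 0$ forces $|\alpha|_\sigma=m$. Hence $a$ is a $\delta_\lambda$-homogeneous polynomial supported on monomials of $\sigma$-degree $m$, which yields $m\in\mathcal{A}$ and, together with the converse direction, the full identification of $\mathcal{A}$ and of the constant (degree-zero) case.

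I expect the only genuinely delicate point to be the vanishing lemma and its reliance on regularity: it is precisely the smoothness of $a$, hence the continuity at the origin of all its derivatives, that excludes nonpolynomial homogeneous functions (such as $|x|^m$ with non-even $m$ in the isotropic case) and forces the negative-degree derivatives to vanish. Everything else is bookkeeping with the inequality $|\alpha|_\sigma\geq|\alpha|$, which uses that the $\sigma_i$ are positive integers, together with the standard fact that a $C^\infty$ function with vanishing order-$N$ derivatives is a polynomial of degree below $N$.
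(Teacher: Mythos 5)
Your proof is correct. Note that the paper itself contains no proof of Proposition \ref{prop.smooth.hom}: it is stated as background material and the proof is explicitly deferred to the monographs \cite{BLU07} and \cite{FR16}, so there is no in-paper argument to compare against. Your route --- differentiating the homogeneity identity to get that $\partial^\alpha a$ is $\delta_\lambda$-homogeneous of degree $m-|\alpha|_\sigma$, the vanishing lemma for continuous functions homogeneous of negative degree, the bound $|\alpha|_\sigma\geq|\alpha|$ to kill all derivatives of sufficiently high order, Taylor's formula to conclude polynomiality, and linear independence of monomials to match degrees --- is exactly the standard proof given in those references; you also correctly interpreted the loosely phrased ``if and only if'' in the statement as the assertion that smooth $\delta_\lambda$-homogeneous functions of degree $m$ are precisely the polynomials supported on monomials of $\sigma$-degree $m$, which is the reading the rest of the paper relies on.
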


\begin{remark}
If a function $a$ is smooth and $\delta_\lambda$-homogeneous of degree $m$, then $m\geq 0$. Additionally, given a multi-index $\alpha$, one has

$$ D^\alpha a(x)=
\left\{
\begin{array}{ll}
     0 & \,, \forall \alpha\,\, \text{such that}\,\, |\alpha|_\mathbb{G}>m, \\
     \delta_\lambda\text{-homogeneous of degree $m-|\alpha|_\mathbb{G}$ }&\,, \forall \alpha \,\,\text{such that}\,\, |\alpha|_\mathbb{G}\leq m.
\end{array}
\right.
$$
\end{remark}

\begin{definition}[$\delta_\lambda$-homogeneous vector field of degree $m$] Let $\mathbb{G}=(\mathbb{R}^n,\circ,\delta_\lambda)$. Then,
given a vector field $X\in\mathfrak{g}=\mathrm{Lie}(\mathbb{G})$, we say that $X$ is $\delta_\lambda$-homogeneous of degree $m$ if, for any $\varphi\in C^\infty(\mathbb{G})$, $x\in \mathbb{G}\setminus \{0\}$ and $\lambda>0$, 
$$ X(\varphi(\delta_\lambda(x))=\lambda^m (X\varphi)(\delta_\lambda(x)).$$
\end{definition}

\begin{proposition}[Smooth $\delta_\lambda$-homogeneous vector fields]\label{prop.form.lvf}
Let $\mathbb{G}=(\mathbb{R}^n,\circ,\delta_\lambda)$ and $X\in\mathfrak{g}=\mathrm{Lie}(\mathbb{G})$, that is
$$X=\sum_{j=1}^n a_j(x)\partial_{x_j}.$$
Then $X$ is $\delta_\lambda$-homogeneous of degree $k\in\mathbb{R}$ if and only if for every $j=1,\ldots,n$, $a_j$ is a polynomial function $\delta_\lambda$-homogeneous of degree $\sigma_j-k$ (unless $a_j\equiv 0$). Hence the degree of $\delta_\lambda$-homogeneity of $X$ belongs to every set
$$\mathcal{A}_j=\{\sigma_j-|\alpha|_\mathbb{G} : \alpha \in (\mathbb{N}\cup\{0\} )^n\},$$
whenever $j$ is such that $a_j$ is not identically $0$. In other words, for any fixed $j=1,\ldots, n$, $k\leq \sigma_j$ and $k= \sigma_j-|\alpha|_\mathbb{G} $, for some $\alpha\in (\mathbb{N}\cup\{0\} )^n$.
\end{proposition}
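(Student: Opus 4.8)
The plan is to unwind the definition of $\delta_\lambda$-homogeneity for $X$ directly, reducing the vector-field condition to a scalar homogeneity condition on each coefficient $a_j$, and then to invoke Proposition \ref{prop.smooth.hom} to obtain the polynomial structure and the degree constraints.

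First I would record the elementary chain-rule identity
$$\partial_{x_j}\big(\varphi\circ\delta_\lambda\big)(x)=\lambda^{\sigma_j}\,(\partial_{x_j}\varphi)(\delta_\lambda(x)),$$
which holds because $\delta_\lambda$ scales the $j$-th coordinate by $\lambda^{\sigma_j}$. Applying $X=\sum_j a_j\partial_{x_j}$ to $\varphi\circ\delta_\lambda$ then gives
$$X\big(\varphi\circ\delta_\lambda\big)(x)=\sum_{j=1}^n a_j(x)\,\lambda^{\sigma_j}\,(\partial_{x_j}\varphi)(\delta_\lambda(x)),$$
whereas the right-hand side of the defining relation reads
$$\lambda^{k}\,(X\varphi)(\delta_\lambda(x))=\lambda^{k}\sum_{j=1}^n a_j(\delta_\lambda(x))\,(\partial_{x_j}\varphi)(\delta_\lambda(x)).$$

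To decouple the components, I would test the homogeneity identity against the coordinate functions $\varphi(x)=x_i$, for which $\partial_{x_j}\varphi=\delta_{ij}$. This isolates the single term $j=i$ on each side and produces, for every $x$ and every $\lambda>0$, the scalar relation $\lambda^{\sigma_i}a_i(x)=\lambda^{k}a_i(\delta_\lambda(x))$, i.e.
$$a_i(\delta_\lambda(x))=\lambda^{\sigma_i-k}a_i(x).$$
Hence $X$ being $\delta_\lambda$-homogeneous of degree $k$ forces each non-vanishing $a_i$ to be $\delta_\lambda$-homogeneous of degree $\sigma_i-k$. Conversely, if every $a_j$ obeys this scaling, substituting back into the first display recovers $X(\varphi\circ\delta_\lambda)=\lambda^{k}(X\varphi)\circ\delta_\lambda$ for all $\varphi\in C^\infty(\mathbb{R}^n)$, yielding the reverse implication.

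Finally, since each non-trivial $a_j$ is smooth and $\delta_\lambda$-homogeneous, Proposition \ref{prop.smooth.hom} guarantees that it is a polynomial and that its degree lies in $\mathcal{A}=\{|\alpha|_\sigma:\alpha\in(\mathbb{N}\cup\{0\})^n\}$. Writing $\sigma_j-k=|\alpha|_\sigma$ for a suitable $\alpha$ gives both $k=\sigma_j-|\alpha|_\sigma\in\mathcal{A}_j$ and, because $|\alpha|_\sigma\geq 0$, the bound $k\leq\sigma_j$. I expect no serious obstacle here; the only point demanding a little care is the decoupling step, where one must justify that testing against the coordinate functions suffices to isolate each coefficient separately, which is precisely why the defining identity is required to hold for \emph{all} $\varphi\in C^\infty(\mathbb{R}^n)$.
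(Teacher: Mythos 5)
Your proof is correct, and it is in fact more complete than what the paper itself provides. The paper never proves Proposition \ref{prop.form.lvf} in full: it only verifies the sufficiency direction (coefficients $\delta_\lambda$-homogeneous of degree $\sigma_j-k$ imply $X$ homogeneous of degree $k$) in the remark following the statement --- essentially the same substitution you perform in your converse step, though written there with the factor $\lambda^{\sigma_j}$ carelessly pulled outside the sum over $j$ --- and defers the complete proof to the monograph \cite{BLU07}. Your additional contribution is the necessity direction: testing the defining identity against the coordinate functions $\varphi(x)=x_i$ decouples the coefficients and yields $a_i(\delta_\lambda(x))=\lambda^{\sigma_i-k}a_i(x)$; since $a_i=Xx_i$ is smooth, Proposition \ref{prop.smooth.hom} then gives polynomiality, the membership $k\in\mathcal{A}_i$, and the bound $k\leq\sigma_i$ from $|\alpha|_\sigma\geq 0$. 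The decoupling step you flagged as delicate is in fact unproblematic: the definition requires the identity for \emph{all} $\varphi\in C^\infty(\mathbb{R}^n)$, so specializing to coordinate functions is legitimate, and no further justification is needed. The net effect is a self-contained proof of the full equivalence where the paper relies on a computation for one direction plus a citation for the other.
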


For more details on homogeneous Lie groups and for the proof of the above propositions, we refer the interested reader to \cite{BLU07}, \cite{CG},\cite{FR16} and \cite{FS82}.
\medskip

We shall now proceed with the formal definition of Carnot groups.  We use the following notation: For $V,W$ vector spaces such that $V,W\subseteq \mathfrak{g}$, we denote by \[[V,W]:=\{[v,w]:v \in V, w \in W\},\] 
where $[\cdot, \cdot]$ is the Lie bracket.
 \begin{definition}\label{defn.Car.hom}
	 Let $\mathbb{G}=(\mathbb{R}^n, \circ)$ be a Lie group on $\mathbb{R}^n$, and let $\mathfrak{g}$ be the Lie algebra of $\mathbb{G}$. Then $\mathbb{G}$ is called a stratified group, or \textit{Carnot group}, if $\mathfrak{g}$ admits a vector space decomposition (stratification) of the form
	 \begin{equation}
	 \label{def.carnot}
	 \mathfrak{g}=\bigoplus_{j=1}^{r} V_j\,,\quad \text{such that}
	 \quad 
	 \left\{
	 \begin{array}{l}
	 [V_1,V_{i-1}]= V_{i}\, , 
	 \quad 2\leq i\leq r,\\
	 
	 [V_1,V_r]=\{0\},\\
	 \end{array}
	 \right.
	 \end{equation}
	 where the positive integer $r$ is called the {\it step} of $\mathbb{G}$.
	 \end{definition}

\begin{remark}
Carnot groups are naturally homogeneous Lie groups. Therefore, each Carnot group $\mathbb{G}$ can be equipped with a mapping $\delta_\lambda$, $\lambda>0$, as in Definition \ref{def.HomG} that is an automorphism of the group $\mathbb{G}$. The diffeomorphic map $\exp_{\mathbb{G}}: \mathfrak{g}\rightarrow \mathbb{G} $ ensures that $\delta_\lambda$ is also an automorphism of $\mathfrak{g}$, where $\mathfrak{g}$ is the Lie algebra of $\mathbb{G}$. Note that the  stratification of a Lie algebra $\mathfrak{g}$ is not unique. However, the mapping $\delta_\lambda$ as in Definition \ref{def.HomG} does not depend on the  choice of the stratification; cf. [Proposition 2.2.8 \cite{BLU07}]. We can then denote by $\mathbb{G}=(\mathbb{R}^n,\circ,\delta_\lambda)$ the Carnot group equipped with the natural dilations $\{\delta_\lambda\}$.
	 $$
	 $$
	 \end{remark}

We shall conclude this section by recalling the definition of  sub-Laplacian and   sub-gradient on a Carnot group $\mathbb{G}$.

 \begin{definition}
	 \label{defn.gen.op.subl-g}
	 Let $\mathbb{G}$ be a Carnot group, and let $\mathfrak{g}$ be the corresponding Lie algebra. If $X_j$, $1 \leq j \leq n_1$, are the canonical left (right) invariant vector fields that generate $\mathfrak{g}$, then the second order differential operator 
	 \[
	 \Delta_{\mathbb{G}}=\sum_{j=1}^{n_1}X_{j}^{2}\,,
	 \]
 is called the \textit{canonical left (right) invariant sub-Laplacian} on $\mathbb{G}$, while the vector valued operator 
 \[
 \nabla_{\mathbb{G}}=(X_1,\cdots,X_{n_1})\,,
 \]
 is called the \textit{canonical left (right) invariant $\mathbb{G}$-gradient}. 
	 \end{definition}
\begin{definition}\label{def.hom.n}
We call \textit{homogeneous (quasi-)norm} on the Carnot group $\mathbb{G}$, every continuous, with respect to the Euclidean topology, mapping $N: \mathbb{G} \rightarrow [0,\infty)$ such that \[ N(x)=0 \text{ if and only if}\quad  x = 0,\] and 
\[
N(\delta_{\lambda}(x))=\lambda N(x)\,,\quad \text{for every}\quad \lambda>0\,,\quad x \in \mathbb{G}\,.
\]
\end{definition}

\section{Sufficient conditions for global Poincar{\'e} inequalities: proof of Theorem \ref{thm.SuffCond2}}
\label{section.SufficientCond}

 


In this section, we establish {\it sufficient} (though not necessary) conditions for global $q$-Poincar{\'e} inequalities to hold for some 
probability measures on Carnot groups, by proving Theorem \ref{thm.SuffCond2}.
 The probability measures suitable for our purposes - denoted by $\mu_p$ and defined below - have densities (with respect to the Haar measure) depending on a parameter $p$ and on a fixed homogeneous quasi-norm $N$ on $\mathbb{G}$. As for the necessity of our conditions, it is not known yet if they are also necessary to obtain Poincaré inequalities with respect to probability measures of the form considered here.



\medskip

Given  a homogeneous quasi-norm $N$,   we define the probability measure $\mu_p$ as  
\begin{equation}\label{def.measure}
 \mu_p:= \frac{1}{Z}e^{-a N^p}dx,   
\end{equation}
where $Z$ is a normalization constant, $a\in(0,\infty)$, and $p\in (0,\infty)$.
As we will explain later, all our results will hold for a large class of perturbations of such measures.

{Here and later on we use a convention $a  \gtrsim b$ (resp. $a \lesssim b$) to say that there exists a constant $C\in(0,\infty)$ independent of $a,b$ so that $a \geq C b$ (resp. $a \leq C b$).} Also, for $p,q\in[1,\infty]$ we say that   $q$ is the conjugate exponent of $p$ iff $\frac1p+\frac1q=1$.

We remark that the requirement that $f$ is Lipschitz, with respect to the Carnot-Carath\'eodory distance, guarantees that the sub-gradient $\nabla_{\mathbb{G}}f$ exists a.e., see Theorem 2.5 in \cite{MSC01}. Moreover, by Theorem 4.2.4 in \cite{HKST15} such functions are dense in $L^q(\mu)$, for $q \in [1, \infty)$.



\begin{remark}[Regarding the condition \eqref{grad-condition2}] 
Observe that condition \eqref{grad-condition2} implies that 
$$\mathrm{Ker}( |\nabla_\mathbb{G}N|):= \{x\in\mathbb{R}^n: |\nabla_{\mathbb{G}} N(x)|=0 \}\subseteq \{x\in\mathbb{R}^n: x_{j_0}=0 \}.$$
In other words, a necessary (but not sufficient) condition to have \eqref{grad-condition2} is that $\mathrm{Ker}( |\nabla_\mathbb{G}N|)$ is contained in the hyperplane $\{x\in\mathbb{R}^n: x_{j_0}=0 \}$.

\end{remark}
For the proof of Theorem \ref{thm.SuffCond2} the following two auxiliary lemmas are necessary.

\begin{lemma}\label{lemma0}
Let $\mathbb{G}$ be a Carnot group on $\mathbb{R}^n$, and let $N$ be a homogeneous quasi-norm on $\mathbb{G}$ smooth away from the origin. Then, for all $x\in\mathbb{G}\setminus\{0\}$, we have

\begin{equation}\label{eq.gradest2}
    |\nabla_\mathbb{G} N(x)|\lesssim 1
\end{equation}

\begin{equation}\label{eq.subLdest2}
    |\Delta_\mathbb{G} N(x)|\lesssim \frac{1}{N(x)}.
\end{equation}
\end{lemma}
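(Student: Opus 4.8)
The plan is to reduce both estimates to a compactness argument on the unit level set of $N$ by exploiting homogeneity, and the whole proof is essentially bookkeeping of dilation degrees. First I would record the homogeneity degrees of the quantities involved. Since each generator $X_j$, $1\le j\le n_1$, is $\delta_\lambda$-homogeneous of degree $1$ (Remark \ref{rem.form.gen.v.f}) and $N\circ\delta_\lambda=\lambda N$ by Definition \ref{def.hom.n}, applying $X_j$ to the identity $N(\delta_\lambda(x))=\lambda N(x)$ and using the defining relation $X_j(\varphi\circ\delta_\lambda)=\lambda\,(X_j\varphi)\circ\delta_\lambda$ gives
$$\lambda\,(X_jN)(x)=\lambda\,(X_jN)(\delta_\lambda(x)),$$
so that $X_jN$ — and hence $|\nabla_{\mathbb{G}}N|=\big(\sum_{j}(X_jN)^2\big)^{1/2}$ — is $\delta_\lambda$-homogeneous of degree $0$. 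Running the same computation with the degree-$2$ operator $\Delta_{\mathbb{G}}=\sum_j X_j^2$ in place of $X_j$ (each $X_j^2$ being degree $2$) yields $(\Delta_{\mathbb{G}}N)(\delta_\lambda(x))=\lambda^{-1}(\Delta_{\mathbb{G}}N)(x)$, i.e. $\Delta_{\mathbb{G}}N$ is $\delta_\lambda$-homogeneous of degree $-1$. Both functions are continuous on $\mathbb{G}\setminus\{0\}$ because $N$ is smooth there.

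Next I would pass to the unit sphere $\Sigma:=\{x\in\mathbb{G}:N(x)=1\}$. Since $N$ is continuous with $N^{-1}(\{0\})=\{0\}$, and all homogeneous quasi-norms on a homogeneous group are mutually equivalent (a standard fact; cf. \cite{BLU07}, \cite{FR16}), $\Sigma$ is contained in a Euclidean annulus bounded away from $0$ and $\infty$; being also closed, $\Sigma$ is compact. Consequently the continuous functions $|\nabla_{\mathbb{G}}N|$ and $|\Delta_{\mathbb{G}}N|$ attain finite maxima $C_1:=\max_{\Sigma}|\nabla_{\mathbb{G}}N|$ and $C_2:=\max_{\Sigma}|\Delta_{\mathbb{G}}N|$ on $\Sigma$.

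Finally I would scale back out. Given $x\neq 0$, set $\lambda=1/N(x)$, so that $\delta_\lambda(x)\in\Sigma$. Degree-$0$ homogeneity gives $|\nabla_{\mathbb{G}}N(x)|=|\nabla_{\mathbb{G}}N(\delta_\lambda(x))|\le C_1$, which is \eqref{eq.gradest2}; degree-$(-1)$ homogeneity gives $|\Delta_{\mathbb{G}}N(\delta_\lambda(x))|=N(x)\,|\Delta_{\mathbb{G}}N(x)|$, whence $|\Delta_{\mathbb{G}}N(x)|=N(x)^{-1}|\Delta_{\mathbb{G}}N(\delta_\lambda(x))|\le C_2/N(x)$, which is \eqref{eq.subLdest2}. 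The only genuinely non-formal input is the compactness of $\Sigma$; once that standard property of homogeneous quasi-norms is in hand, everything else is a routine tracking of homogeneity degrees. Accordingly, I expect the first step — correctly observing that a degree-$m$ homogeneous operator lowers the homogeneity degree of $N$ from $1$ to $1-m$ — to be the only place where care is needed, and I see no substantive obstacle beyond it.
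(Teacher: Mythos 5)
Your proposal is correct and follows essentially the same route as the paper: the paper's (one-line) proof invokes exactly the facts you establish, namely that $|\nabla_{\mathbb{G}}N|$ and $\Delta_{\mathbb{G}}N$ are $\delta_\lambda$-homogeneous of degrees $0$ and $-1$ respectively, with the bounds then following from continuity on the compact level set $\{N=1\}$ and scaling. Your write-up merely makes explicit the degree bookkeeping and the compactness argument that the paper leaves implicit.
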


\begin{proof}
The proof  follows from the homogeneity properties of $N$ and the fact that $|\nabla_{\mathbb{G}}N|^2$ and $\Delta_{\mathbb{G}}N$ are homogeneous functions of degree $0$ and $-1$, respectively.
\end{proof}

\begin{lemma}\label{lemma1.2}
Let $p\geq 2\gamma$ and let $q$ be its conjugate exponent.  Let $N$ be a homogeneous quasi-norm smooth away from the origin satisfying \eqref{grad-condition2} for some $j_0\in \{1,\ldots, n_1\}$, and let $\mu_p:=e^{-aN^p}dx$. Then there exist $A\in(0,\infty)$ and $B\in[0,\infty)$ such that  
\begin{equation}
    \label{est.lemma1.2}
\mu_p(|f|^q N^{p-2\gamma}| x_{j_0}| ^{2\gamma})\leq A\mu_p(|\nabla_\mathbb{G}f|^q)+B\mu_p(|f|^q),
\end{equation}
for every function $f$ which is Lipschitz (with respect to the Carnot-Carath\'eodory distance).
\end{lemma}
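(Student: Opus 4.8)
The plan is to prove \eqref{est.lemma1.2} by an integration-by-parts argument in the spirit of the $U$-bounds method: I first \emph{produce} the weight $N^{p-2}|x_{j_0}|^2|\nabla_\mathbb{G}N|^2$ by differentiating the density $e^{-aN^p}$, then convert it into the target weight $N^{p-2\gamma}|x_{j_0}|^{2\gamma}$ using the hypothesis \eqref{grad-condition2}, and finally dispose of the error terms using the homogeneity bounds of Lemma \ref{lemma0} and Young's inequality. Since the canonical generators have the form \eqref{generators} with coefficients $a_{j,k}$ depending only on earlier coordinates, each $X_j$ is divergence-free for the Lebesgue measure, so for $\mu_p$ one has the identity
$$\int (X_j u)\,d\mu_p = a p \int u\, N^{p-1}(X_j N)\,d\mu_p,$$
valid for suitably decaying $u$ (the fast decay of $e^{-aN^p}$ makes the boundary terms vanish; I would also replace $|f|$ by $(f^2+\delta^2)^{1/2}$ to handle the non-smoothness of $|f|^q$ and let $\delta\to0$, using that $q>1$). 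Applying this with $u_j=|f|^q N^{-1}|x_{j_0}|^2 (X_j N)$ and summing over $j=1,\dots,n_1$ yields
$$a p \int |f|^q N^{p-2}|x_{j_0}|^2|\nabla_\mathbb{G}N|^2\,d\mu_p = \sum_{j=1}^{n_1}\int X_j\big(|f|^q N^{-1}|x_{j_0}|^2 (X_j N)\big)\,d\mu_p.$$

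The left-hand side, call it $S$, is the right object: by \eqref{grad-condition2} one has $|\nabla_\mathbb{G}N|^2\gtrsim |x_{j_0}|^{2\gamma-2}/N^{2\gamma-2}$, hence $N^{p-2}|x_{j_0}|^2|\nabla_\mathbb{G}N|^2\gtrsim N^{p-2\gamma}|x_{j_0}|^{2\gamma}$, so if I bound $S$ by the right-hand side of \eqref{est.lemma1.2} I am done. Expanding the right-hand side of the identity by the Leibniz rule, the derivative falling on $|f|^q$ produces $\int N^{-1}|x_{j_0}|^2\,(\nabla_\mathbb{G}|f|^q\cdot\nabla_\mathbb{G}N)\,d\mu_p$, while the rest, after using $X_j(x_{j_0})=\delta_{j,j_0}$ for $j\le n_1$, gives three terms: $-\int|f|^q N^{-2}|x_{j_0}|^2|\nabla_\mathbb{G}N|^2\,d\mu_p$ (nonpositive, hence discardable), $2\int|f|^q N^{-1}x_{j_0}(X_{j_0}N)\,d\mu_p$, and $\int |f|^q N^{-1}|x_{j_0}|^2\Delta_\mathbb{G}N\,d\mu_p$. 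The latter two are lower order: using $|\nabla_\mathbb{G}N|\lesssim1$ and $|\Delta_\mathbb{G}N|\lesssim N^{-1}$ from \eqref{eq.gradest2}--\eqref{eq.subLdest2}, together with the elementary bound $|x_{j_0}|\lesssim N$ (both are $\delta_\lambda$-homogeneous of degree $1$, so $|x_{j_0}|/N$ is homogeneous of degree $0$ and bounded on $\{N=1\}$), each is controlled by $\mu_p(|f|^q)$, feeding the constant $B$.

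It remains to treat the gradient term. Bounding $|\nabla_\mathbb{G}N|\lesssim1$ and $|\nabla_\mathbb{G}|f|^q|\le q|f|^{q-1}|\nabla_\mathbb{G}f|$, I write its integrand as $|\nabla_\mathbb{G}f|\cdot\big(|f|^{q-1}N^{-1}|x_{j_0}|^2\big)$ and apply Young's inequality with the conjugate pair $(q,p)$ in the form $ab\le \tfrac1q\lambda^q a^q+\tfrac1p\lambda^{-p}b^p$. The first piece contributes $\lesssim\lambda^q\,\mu_p(|\nabla_\mathbb{G}f|^q)$, the $A$-term; the second contributes $\lesssim\lambda^{-p}\int |f|^q N^{-p}|x_{j_0}|^{2p}\,d\mu_p$, and here the decisive observation is that $N^{-p}|x_{j_0}|^{2p}\lesssim N^{p-2\gamma}|x_{j_0}|^{2\gamma}$, which follows from $|x_{j_0}|\lesssim N$ and $2p-2\gamma\ge0$. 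Thus this piece is $\lesssim \lambda^{-p}S$ (after using $T\lesssim S$ again). Collecting everything gives $apS\le \lambda^q\,\mu_p(|\nabla_\mathbb{G}f|^q)+C\lambda^{-p}S+C\,\mu_p(|f|^q)$, and choosing $\lambda$ large enough that $C\lambda^{-p}<ap$ lets me absorb the $S$-term on the left, yielding $S\lesssim \mu_p(|\nabla_\mathbb{G}f|^q)+\mu_p(|f|^q)$; since the target integral is $\lesssim S$, this is \eqref{est.lemma1.2}. The step needing most care is the exponent bookkeeping in the integration by parts: the weight $N^{-1}|x_{j_0}|^2$ must be tuned so that \eqref{grad-condition2} regenerates exactly $N^{p-2\gamma}|x_{j_0}|^{2\gamma}$ and so that the Young error is dominated by that same weight, while the absorption forces $\lambda$ to be taken \emph{large} rather than small, which is precisely what keeps the coefficient of $S$ below $ap$.
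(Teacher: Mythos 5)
Your proof is correct, and it rests on the same $U$-bound mechanism as the paper's own argument --- an integration by parts against $d\mu_p$ that generates the weight $N^{p-1}|\nabla_\mathbb{G}N|^2$, followed by \eqref{grad-condition2}, the homogeneity bounds of Lemma \ref{lemma0}, Young's inequality for the conjugate pair $(q,p)$, and an absorption step --- but the bookkeeping is organized in a genuinely different way. The paper pairs $\nabla_\mathbb{G}N$ with $\nabla_\mathbb{G}(e^{-aN^p}f)$ for scalar $f\geq 0$, which at first only yields the weight $N^{p-2\gamma}|x_{j_0}|^{2\gamma-1}$, and then upgrades it in stages via two substitutions, $f\mapsto f|x_{j_0}|$ (using $|\nabla_\mathbb{G}|x_{j_0}||=1$ a.e.) and $f\mapsto f^q$, closing with the bound $|x_{j_0}|^p\leq N^{p-2\gamma}|x_{j_0}|^{2\gamma}$ and absorption into the target integral itself. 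You instead build the entire weight into the single test function $u_j=|f|^qN^{-1}x_{j_0}^2(X_jN)$, so that one integration by parts produces $S=\mu_p(|f|^qN^{p-2}|x_{j_0}|^2|\nabla_\mathbb{G}N|^2)\gtrsim T$ at once; the extra Leibniz terms are then either sign-definite (hence discardable) or of order $\mu_p(|f|^q)$, and your Young step, based on the analogous bound $N^{-p}|x_{j_0}|^{2p}\lesssim N^{p-2\gamma}|x_{j_0}|^{2\gamma}$, absorbs into $S$ rather than into $T$ --- your ``choose $\lambda$ large'' is exactly the paper's ``choose $\varepsilon$ small''. What your route buys: the weight $x_{j_0}^2$ is smooth, so you never need the a.e.\ identity $|\nabla_\mathbb{G}|x_{j_0}||=1$ nor the ``suitable approximation'' the paper invokes when replacing $f$ by $f|x_{j_0}|$, and one of your error terms comes out with a favorable sign instead of needing an estimate. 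What the paper's route buys: its test function contains no negative powers of $N$, so there is nothing to verify at the origin, whereas your argument must (and implicitly does, via $|u_j|\lesssim|f|^qN\to 0$ near $0$) justify that the singularity of $N^{-1}$ is harmless in the integration by parts. Both arguments share, and neither makes fully explicit, the remaining standard technicality that the weighted integral must be finite (e.g.\ for $f\in C_0^\infty$) for the absorption to be legitimate, after which the general case follows by approximation exactly as in the paper.
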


\proof

We shall prove the result for $f\geq 0$ such that $f\in C_0^\infty(\mathbb{G})$, which, by approximation, gives the result when $f$ is non-negative and Lipschitz (this can be seen by using the fact that the density of the probability measure has strong decay properties, so $C_0^\infty(\mathbb{G})$ is dense in $L^p(\mu)$ and $\mathrm{Lip}(\mathbb{G})\subset L^p(\mu)$). The result for a general $f$ will then follow from this case by 
the fact that a.e. one has $|\nabla_\mathbb{G}|f||\leq |\nabla_\mathbb{G}f|$.

By the Leibniz rule we have that
$$e^{-a N^p}\nabla_\mathbb{G}f=
\nabla_\mathbb{G} (e^{-a N^p}f)+ap  N^{p-1} e^{-a N^p}f (\nabla_\mathbb{G}N),$$
therefore, taking the inner product of the vectors $e^{-a N^p}\nabla_\mathbb{G}f$ and $\nabla_\mathbb{G}N$ we have
$$\int \nabla_\mathbb{G}N(x)\cdot (\nabla_\mathbb{G} f(x)) e^{-a N(x)^p} dx$$
\begin{equation}\label{pflemma.1.2}
=\int \nabla_\mathbb{G}N(x)\cdot \nabla_\mathbb{G} (e^{-a N(x)^p}f(x))dx+ ap \int e^{-a N(x)^p}f(x) N^{p-1}(x) |\nabla_\mathbb{G}N(x)|^2dx.
\end{equation}
We then apply Cauchy-Schwarz inequality and \eqref{eq.gradest2} to the left-hand side of \eqref{pflemma.1.2},
integrate by parts  the first term on the right-hand side of \eqref{pflemma.1.2}, and  apply \eqref{grad-condition2} on the second term of the right hand side of it. This gives, for $C_2\geq 1$ and $C_1>0$ such that $|\nabla _\mathbb{G}N(x)|\leq C_1$ (see \eqref{eq.gradest2}), that
\begin{align}
 C_1\int |\nabla_\mathbb{G} f(x)| e^{-a N(x)^p} dx &\geq \int |\nabla_\mathbb{G}N(x)||\nabla_\mathbb{G} f(x)| e^{-a N(x)^p} dx\\
&\geq -\int (\Delta_{\mathbb{G}}N(x)) f(x) e^{-a N(x)^p}dx\\
&+ ap \int e^{-a N(x)^p}f(x) N^{p-1}(x) |\nabla_\mathbb{G}N(x)|^2dx\\
&\geq - C_2 \int |\Delta_{\mathbb{G}}N(x)| f(x) e^{-a N(x)^p}dx\\
&+ ap \,C_0 \int N(x)^{p-2\gamma+1}|x_{j_0}|^{2\gamma-2}f(x) e^{-a N(x)^p}dx.
\label{est.may.22}
\end{align}
Then, moving the term containing $\Delta_\mathbb{G}N$ on the right hand side of \eqref{est.may.22} to the left hand side of \eqref{est.may.22},  we get  
$$ap \,C_0 \int N(x)^{p-2\gamma+1}|x_{j_0}|^{2\gamma-2}f(x) e^{-a N(x)^p}dx$$
$$\leq C_1\int |\nabla_\mathbb{G}f(x)|e^{-a N(x)^p}dx +C_2\int |\Delta_{\mathbb{G}}N(x)| f(x) e^{-a N(x)^p}dx,$$
where $j_0$ is the index appearing in \eqref{grad-condition2} and $C_0,C_1,C_2\in(0,\infty)$ are some constants independent of the function $f$.\\
Since, by homogeneity,
we have $|\Delta_{\mathbb{G}}N(x)|\lesssim 1/N(x)$ and $|x_{j_0}|^{2\gamma-2} \gtrsim |x_{j_0}|^{2\gamma-1}/N(x)$, we get that
$$ap \,C_0 \int N(x)^{p-2\gamma}|x_{j_0}|^{2\gamma-1}f(x) e^{-a N(x)^p}dx$$
$$\leq C_1\int |\nabla_\mathbb{G}f(x)|e^{-a N(x)^p} dx +C_2\int \frac{1}{N(x)} f(x) e^{-a N(x)^p}dx,$$
possibly with new constants that we keep denoting here and later on by $C_0,C_1,C_2$.
 Note that the previous inequality can be rewritten in short notation as 
$$ap\, C_0 \mu_p(f N^{p-2\gamma}|x_{j_0}|^{2\gamma-1})\leq C_1\mu_p(|\nabla_\mathbb{G}f|)+ C_2\mu_p\left( \frac{f}{N}\right).$$
Now (using suitable approximation), we replace $f$ with $f \cdot |x_{j_0}|$ in the inequality above and get
$$ap \,C_0 \int N(x)^{p-2\gamma}|x_{j_0}|^{2\gamma}f(x) e^{-a N(x)^p}dx$$
$$\leq C_1\int |\nabla_\mathbb{G}(f(x)|x_{j_0}|)|e^{-a N(x)^p}dx +C_2\int \frac{|x_{j_0}|}{N(x)} f(x) e^{-a N(x)^p}dx.$$

Using that $|\nabla_\mathbb{G}(f(x)|x_{j_0}|)|\leq|\nabla_\mathbb{G}f(x)|\,|x_{j_0}|+ f(x)$ (since $\left|\nabla |x_{j_0}|\right|=1$ and $f\geq 0$), and that $\frac{|x_{j_0}|}{N(x)}\lesssim 1$, it follows that 
$$ap \,C_0 \int N(x)^{p-2\gamma}|x_{j_0}|^{2\gamma}f(x) e^{-a N(x)^p}dx$$
$$\leq C_1\int |\nabla_\mathbb{G}f(x)|\, |x_{j_0}|e^{-a N(x)^p}dx+C_2\int f(x) e^{-a N(x)^p}dx.$$
Replacing $f$ with $f^q$ in the previous estimate, we get

$$ap \,C_0 \int N(x)^{p-2\gamma}|x_{j_0}|^{2\gamma}f(x)^q e^{-a N(x)^p}dx$$
$$\leq C_1\int qf^{q-1}|\nabla_\mathbb{G}f(x)|\, |x_{j_0}|e^{-a N(x)^p}dx+C_2\int f(x)^q e^{-a N(x)^p}dx.$$
An application of  Young's inequality, for any $\varepsilon>0$, yields
$$ qf^{q-1}|\nabla_\mathbb{G}f|\, |x_{j_0}|\leq \frac{1}{\varepsilon^{q-1}}|\nabla_\mathbb{G}f|^q+\frac{q}{p}\varepsilon |x_{j_0}|^p f^q.$$
Therefore, since $|x_{j_0}|^p\leq N^{p-2\gamma}(x)\cdot |x_{j_0}|^{2\gamma}$, we have 
$$\left(ap C_0-C_1\frac{q}{p}\varepsilon\right)
\mu_p(f^q N^{p-2\gamma}|x_{j_0}|^{2\gamma})
\leq \frac{C_1}{\varepsilon^{q-1}}
\mu_p(|\nabla_\mathbb{G}f|^q)+ C_2\mu_p(f^q).$$
Finally, choosing $\varepsilon>0$ sufficiently small in such a way that the constant on the left hand side is positive, we conclude \eqref{est.lemma1.2} for non-negative compactly supported functions, which implies the result for suitable general $f$.
\endproof

\begin{remark}
Note that in Theorem \ref{thm.SuffCond2} and in the lemmas above we require the quasi-norm $N$ to be smooth away from the origin. This assumption is used to avoid technical problems when applying the vector fields to $N$. However, in the case when $N$ is not smooth on some hyperplane, one can split the domain into connected components where $N$ is differentiable. In such cases, working locally inside each connected component, and using a suitable approximation argument like in \cite{CFZ24}, one can extend the result to the whole domain.
\end{remark}

With the previous lemma at our disposal we can now prove Theorem \ref{thm.SuffCond2}.

\proof[Proof of Theorem \ref{thm.SuffCond2}]
It is easy to check that for all $m\in \mathbb{R}$, a.e. we have $$\mu_p(|f-f_{\mu_p}|^q)\leq 2^q \mu_p(|f-m|^q)\,.$$ So  it is enough to prove the estimate for $\mu_p(|f-m|^q)$ with a suitable choice of $m$.
Then, with some $R>0$ and $L>0$ to be chosen later, we have

$$\mu_p(|f-m|^q)=\mu_p(|f-m|^q \boldsymbol{\mathds{1}} _{\{|x_{j_0}|^{2\gamma}N^{p-2\gamma}\geq R\}}) + \mu_p(|f-m|^q \boldsymbol{\mathds{1}} _{\{|x_{j_0}|^{2\gamma}N^{p-2\gamma}\leq R\}}\boldsymbol{\mathds{1}}_{\{N\leq L\}})$$
\begin{equation}
    \label{prf.thm.1}
+ \mu_p(|f-m|^q\boldsymbol{\mathds{1}}_{\{|x_{j_0}|^{2\gamma}N^{p-2 \gamma}\leq R\}}\boldsymbol{\mathds{1}}_{\{N\geq L\}})
\end{equation}
$$=I+II+III.$$

For the term $I$, by Lemma \ref{lemma1.2}, we have
$$I 
=\mu_p(|f-m|^q \boldsymbol{\mathds{1}}_{\{|x_{j_0}|^{2\gamma}N^{p-2\gamma}\geq R\}})\leq \frac{1}{R}\mu_p(|f-m|^qN^{p-2\gamma} |x_{j_0}|^{2\gamma})$$
$$\leq \frac{A}{R}\mu_p(|\nabla_\mathbb{G}f|^q)+\frac{B}{R}\mu_p(|f-m|^q).$$

To estimate $II$ we first observe that, given any homogeneous quasi-norm $N$, there exists $C\in(0,\infty)$ such that the Carnot-Carath\'{e}odory distance $d(x)=d(x,0)$ satisfies
$$C^{-1} N(x)\leq d(x)\leq C N(x), \quad \forall x\in \mathbb{G}.$$
 Therefore, given $L>0$, there exists $L_1,L_2>0$ such that for the Carnot-Carath\'{e}odory ball $B_{L_1}$ of radius $L_1$ centered at the origin, we have  
\begin{equation}\label{equiv.CC} \{N\leq L\}\subset B_{L_1}\subset \{N\leq L_2\}.\end{equation}
Choosing $m=\frac{1}{|B_{L_1}|}\int_{B_{L_1}} f(x)dx$, we get
\begin{align*}
 II&=\mu_p(|f-m|^q \boldsymbol{\mathds{1}}_{\{|x_{j_0}|^{2\gamma}N^{p-2\gamma}\leq R\}}\boldsymbol{\mathds{1}}_{\{N\leq L\}})\\
 &\leq \mu_p(|f-m|^q\boldsymbol{\mathds{1}}_{\{N\leq L\}})=\frac{1}{Z}\int_{\{N\leq L\}}|f(x)-m|^qe^{-aN(x)^p}dx\\
 &\leq\frac{1}{Z}\int_{\{N\leq L\}}|f(x)-m|^qdx\\
 & \leq\frac{1}{Z}\int_{B_{L_1}}|f(x)-m|^qdx\\
 & \leq\frac{P_0(L_1)}{Z}\int_{B_{L_1}}|\nabla_\mathbb{G}f(x)|^qdx\\
 &\leq\frac{P_0(L_1)e^{aL_2^p}}{Z}\int_{\{N\leq L_2\}}|\nabla_\mathbb{G}f(x)|^qe^{-aN(x)^p} dx\\
 & \leq P_0(L_1)e^{aL_2^p} \mu_p( |\nabla_\mathbb{G}f(x)|^q),
\end{align*}
where in the fourth line we applied the Poincar\'e inequality on balls (see \cite{Jer86}).

We are now left with the estimate of term $III$. To this end define a set
$$A_{L,R}:=\{ x\in \mathbb{G}: |x_{j_0}|^{2\gamma}\leq R, N(x)\geq L\}.$$
We claim that for all $x\in A_{L,R}$  there exist  a positive constant $c'=c'(R)<1$ sufficiently small and a horizontal curve $\gamma_x: [0,t]\mapsto G$ such that 
$\gamma_x(0)=e$, $(x\circ \gamma_x(t))_{j_0}>c' R^{\frac{1}{2\gamma}}$, and
\begin{equation}\label{ineqN}
    R\leq N(x\circ \gamma_x(s))< N(x), \quad \forall s \in (0,t].
\end{equation}
Let us clarify that the reason for the subscript $x$ in $\gamma_x$ is to stress that the choice of the curve with the above properties with respect to $x$ depends on $x$, so $x$ is not the starting point of $\gamma_x$.

 Next, we give the proof of our claim. In the following we shall say that a geodesic $\gamma:[0,t]\rightarrow \mathbb{G}$ connects $x$ and $y$ if $\gamma(0)=e$, hence $x=x\circ \gamma(0)$, and $y=x\circ \gamma(t)$. 
Since $N$ is smooth and there exists $r=r(x)>0$ such that $x\in \partial B_r$, we have $N(x)> N(y)$ for all $y\in B_r$.
Then, we choose $y\in B_r$ and take $\gamma_{1,x}:[0,t]\mapsto \mathbb{G}$  the horizontal geodesic connecting $x$ and $y$, that is such that $x\circ \gamma_{1,x}(0)=x $ and $x\circ \gamma_{1,x}(t)=y$. Note that $N(x\circ \gamma_{1,x}(s))<N(x)$ for all $s\in (0,t]$.
Due to our choice of $y$, which satisfies $N(y)<N(x)$, there exists $c=c(r,R)$, with $|c|<1$ sufficiently small, such that $N(y\circ (0,\ldots, c R^{\frac{1}{2\gamma}},\ldots, 0))=N(y_1,\ldots, y_{j_0}+cR^{\frac{1}{2\gamma}}, y_{j_0+1},\ldots, y_n )<N(x)$
and 
$$y_{j_0}+cR^{\frac{1}{2\gamma}}> c' R^{\frac{1}{2\gamma}}$$
for some $0<c'<1$ small.
Now we call $\gamma_{2,x}$ the horizontal geodesic 
connecting $y$ and $y\circ (0,\ldots, c R^{\frac{1}{2\gamma}},\ldots, 0)$, that is such that $\gamma_{2,x}(0)=e$, $y\circ\gamma_{2,x}(0)=y$ and $y\circ\gamma_{2,x}(t)=y\circ (0,\ldots, c R^{\frac{1}{2\gamma}},\ldots, 0)$, and $\gamma_x:[0,t]\mapsto \mathbb{G}$ the union of $\gamma_{1,x}$ and $\gamma_{2,x}$, more precisely $\gamma_x(s):=\gamma_{1,x}(s)\circ \gamma_{2,x}(s)$, $s\in [0,t]$. Since $\gamma_x$ connects $x$ and $x\circ h(x)=y(x)\circ (0,\ldots, c R^{\frac{1}{2\gamma}},\ldots, 0)$, where $h(x)$ is a suitable point which depends on $x$ and satisfiying the previous identity, we conclude that $\gamma_x$ satisfies \eqref{ineqN} and hence the claim.

Note that, by choosing $L$ much larger than $R$, then for all $x\in A_{L,R}$ we can take $r(x)\sim R$  in the argument above so that $d(x,x\circ h)=d(h)\lesssim R$.
 Hence, using the properties of the curve $\gamma_x$ and the notation $g=f-m$, we get
$$III=\mu_p(|g|^q \boldsymbol{\mathds{1}} _{\{|x_{j_0}|^{2\gamma}N^{p-2\gamma}\leq R\}}\boldsymbol{\mathds{1}}_{\{N\geq L\}})\lesssim\int_{A_{L,R}}|g(x)|^q d\mu_p(x)$$
\begin{equation}\label{NewEst1}
\lesssim \int_{A_{L,R}}|g(x)-g(x\circ h)|^q d\mu_p(x)+ 
 \int_{A_{L,R}}|g(x\circ h)|^q d\mu_p(x).\end{equation}
 Let us now define $h:=h(x)=\gamma_x(t)$. Then by H\"older's inequality and the fact that $d$ is the control distance, i.e., we have $d(x,x\circ h)=d(h)\leq t$ (since $d(h):=d(e,h)$, by the definition of $d$, is the shortest ``time'' needed to go from $e$ to $h$ via a horizontal path), we obtain
\begin{align}
\int_{A_{L,R}}|g(x)-g(x\circ h)|^q d\mu_p(x)&=
\int_{A_{L,R}}\left| \int_0^t \frac{d}{ds}g(x\circ \gamma_x(s))ds\right|^q d\mu_p(x)\nonumber\\
&\leq \int_{A_{L,R}} t^{q} \int_0^t |\nabla_{\mathbb{G}}g(x\circ \gamma_x(s))|^q ds\, d\mu_p(x)\nonumber\\
&\leq\int_{A_{L,R}}  d(h)^{q} \int_0^t |\nabla_{\mathbb{G}}g(x\circ \gamma_x(s))|^q ds\, d\mu_p(x) \nonumber\\
&\lesssim R^q \int_{A_{L,R}}  \int_0^t |\nabla_{\mathbb{G}}g(x\circ \gamma_x(s))|^q ds\, d\mu_p(x) \nonumber\\
& \stackrel{\eqref{ineqN}}\lesssim R^q \int_{A_{L,R}}  \int_0^{cR}|\nabla_{\mathbb{G}}g(x\circ \gamma_x(s))|^q ds\, d\mu_p(x\circ \gamma_x(s)) \nonumber\\
&\lesssim R^{q+1} \int_{\mathbb{G}} |\nabla_{\mathbb{G}}g(x)|^q d\mu_p(x)\,\nonumber\\ 
&=C R^{q+1}\mu_p(|\nabla_{\mathbb{G}}g(x)|^q).
\label{eqn3005.1}
\end{align}
To estimate the second term in \eqref{NewEst1} we use \eqref{ineqN} and the estimate $(x\circ h)_{j_0} =(y(x)\circ (0,\ldots, c R^{\frac{1}{2\gamma}},\ldots, 0))\gtrsim R^{\frac{1}{2\gamma}}$ (which implies $N(x\circ h)\gtrsim R$),
so that an application of  Lemma  \ref{lemma1.2}  yields
\begin{align}
  \int_{A_{L,R}}|g(x\circ h)|^q d\mu_p(x)& \lesssim \frac{1}{R^{p}} \int_{A_{L,R}}|g(x\circ h)|^q |(x\circ h)_{j_0}|^{2\gamma} N(x\circ h)^{p-2\gamma} d\mu_p(x\circ h)\\
  & \lesssim \frac{1}{R^{p}} \int_{\mathbb{G}}|g(x)|^q |(x)_{j_0}|^{2\gamma} N(x)^{p-2\gamma} d\mu_p(x)\\
  & \leq \frac{A}{R^{p}} \mu_p(|\nabla_\mathbb{G} f|^q)+\frac{B}{R^p} \mu_p(|f-m|^q), \label{eqn3005.2}
\end{align}
which gives
\begin{equation}
    III\leq \left(\frac{A}{R^{p}}+CR^{q+1}\right) \mu_p(|\nabla_\mathbb{G} f|^q)+ \frac{B}{R^{p}}\mu_p(|f-m|^q)
\end{equation}
Finally, putting together the estimates for the three terms $I,II$ and $III$, we get
\begin{align*}
    &\left(1-\frac{B}{R^p}-\frac B R \right)\mu_p(|f-m|^q)\\
    &\leq \left(\frac A R+P_0(L_1)e^{aL_2^p}+\frac{A}{R^p}+C R^{q+1} \right)\mu_p(|\nabla_gf|^q),
\end{align*}
hence, by choosing $R$ and $L$ sufficiently large, with $L>R$, we get the result.
This concludes the proof.

\endproof

\begin{proof}[Proof of Corollary \ref{cor.spectralgap}]
    The proof is an immediate consequence of Theorem \ref{thm.SuffCond2} combined with the fact that, whenever \eqref{qPoinc2} holds true for some $q>1$, then it holds true also for $q'>q$ under the same unchanged probability measure; see Proposition 2.1.11 in \cite{Ing10}.
    
\end{proof}
\begin{remark}
    We want to point out that the choice of the homogeneous quasi-norm $N$ on the Carnot group provides a radical difference on the spectrum of the operators of the form \eqref{sch.par.case}. To be more precise, for a probability measure on an $H$-type group of the form $Z^{-1}e^{-aN^p}$, $p \in (1,2)$, the corresponding operator  \eqref{sch.par.case}  has empty essential spectrum when $N$ is the Kaplan norm, and does not even have a spectral gap when $N$ is the Carnot-Carath\'{e}odory distance; see Remark 4.5.4 in \cite{Ing10}.
\end{remark}

As a corollary of  Theorem \ref{thm.SuffCond2} one has that the global Poincar\'{e} inequalities of Theorem \ref{thm.SuffCond2} are satisfied by a family of measures whose potential is a perturbation of the one appearing in the density of $\mu_p$.

\begin{corollary}\label{Cor.petr}
Let $d\mu_W:=\tilde{Z}^{-1} e^{-W}d\mu_p$ be a probability measure with a potential $W$ which is Lipschitz (with respect to the Carnot-Carath\'eodory distance) satisfying
\begin{equation}\label{cor.poten}|\nabla_\mathbb{G}W(x)|^q\leq \delta N(x)^{p-2\gamma}|x_{j_0}|^{2\gamma}+\gamma_\delta,\quad \mbox{for almost all }  x\in\mathbb{G}, \end{equation}
for some $0<\delta \ll  1$ and $\gamma_\delta\in (0,\infty)$. Then the measure $\mu_W$ satisfies inequality \eqref{est.lemma1.2} for $p\geq 2\gamma$ and $q$ such that $\frac{1}{q}+\frac{1}{p}=1$.  Moreover, if there exists $\tilde{C}>0$ such that $W \leq \tilde{C}N$ then $\mu_{W}$ satisfies the global Poincar{\'e} inequality.
\end{corollary}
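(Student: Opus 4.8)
The plan is to prove the two assertions in turn, in both cases by re-running the arguments already given for $\mu_p$ with the density of $\mu_W$, namely $\tilde Z^{-1}Z^{-1}e^{-(W+aN^p)}$, and checking that the perturbation $W$ is benign. For the first assertion I would reproduce the proof of Lemma \ref{lemma1.2} verbatim, the only new feature appearing at the Leibniz step: differentiating $e^{-(W+aN^p)}f$ produces, besides the main term $apN^{p-1}e^{-(W+aN^p)}f\,\nabla_\mathbb{G}N$, the additional term $e^{-(W+aN^p)}f\,\nabla_\mathbb{G}W$. Pairing with $\nabla_\mathbb{G}N$ and integrating by parts as before, the main term again reproduces $N^{p-2\gamma+1}|x_{j_0}|^{2\gamma-2}$ on the left through \eqref{grad-condition2}, the $\Delta_\mathbb{G}N$ term is absorbed using $|\Delta_\mathbb{G}N|\lesssim1/N$ from Lemma \ref{lemma0}, and the genuinely new contribution $\int e^{-(W+aN^p)}f\,\nabla_\mathbb{G}N\cdot\nabla_\mathbb{G}W\,dx$ is controlled, via Cauchy--Schwarz and $|\nabla_\mathbb{G}N|\lesssim1$ (Lemma \ref{lemma0}), by $\mu_W(|\nabla_\mathbb{G}W|\,f)$.

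Next I would carry this extra term through the two substitutions $f\mapsto f|x_{j_0}|$ and $f\mapsto f^q$ exactly as in Lemma \ref{lemma1.2}, arriving at $\mu_W(|\nabla_\mathbb{G}W|\,f^q|x_{j_0}|)$, and this is precisely where hypothesis \eqref{cor.poten} is used. Taking $q$-th roots and using subadditivity of $t\mapsto t^{1/q}$ gives $|\nabla_\mathbb{G}W|\lesssim\delta^{1/q}N^{(p-2\gamma)/q}|x_{j_0}|^{2\gamma/q}+\gamma_\delta^{1/q}$. The $\delta$-piece produces $\delta^{1/q}\mu_W(N^{(p-2\gamma)/q}|x_{j_0}|^{2\gamma/q+1}f^q)$, and since the elementary identity $N^{(p-2\gamma)/q}|x_{j_0}|^{2\gamma/q+1}=N^{p-2\gamma}|x_{j_0}|^{2\gamma}(|x_{j_0}|/N)^{(p-2\gamma)/p}\lesssim N^{p-2\gamma}|x_{j_0}|^{2\gamma}$ holds (by $|x_{j_0}|\lesssim N$ and $p\geq2\gamma$), this piece is absorbed into the target quantity on the left for $\delta$ small enough. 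The $\gamma_\delta$-piece $\gamma_\delta^{1/q}\mu_W(|x_{j_0}|f^q)$ is handled by the pointwise Young-type bound $|x_{j_0}|\leq\varepsilon N^{p-2\gamma}|x_{j_0}|^{2\gamma}+C_\varepsilon$ (valid, again, because $|x_{j_0}|\lesssim N$ and $p\geq2\gamma$), splitting it into a further absorbable part and a harmless multiple of $\mu_W(f^q)$. This yields \eqref{est.lemma1.2} with $\mu_W$ in place of $\mu_p$, which is the first assertion.

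For the second assertion I would feed this $U$-bound into the proof of Theorem \ref{thm.SuffCond2} with $\mu_W$ replacing $\mu_p$. The splitting into $I,II,III$, the choice of $m$, and the translation $h=2R^{\frac{1}{2\gamma}}e_{j_0}$ are insensitive to the precise density, and the estimates of $I$ and $III$ use only the $U$-bound just established. The one step that requires the extra hypothesis $W\leq\tilde C\tn$ is the estimate of $II$: there one replaces the $\mu_W$-integral over $\{N\leq L\}$ by a Lebesgue integral over a Carnot--Carath\'eodory ball in order to apply the Poincar\'e inequality on balls (Theorem \ref{thm.jerison}), which is legitimate only if the density $e^{-(W+aN^p)}$ is bounded above and below by positive constants on bounded sets. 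The local upper bound on the density follows from the local lower boundedness of $W$, itself a consequence of the local bound on $|\nabla_\mathbb{G}W|$ in \eqref{cor.poten}; the local lower bound on the density needs $W$ bounded \emph{above} on bounded sets, which is exactly what $W\leq\tilde C\tn$ provides, $\tn$ being bounded on bounded sets. With both bounds available, step $II$ goes through and, choosing $R$ large as in Theorem \ref{thm.SuffCond2}, the global Poincar\'e inequality for $\mu_W$ follows.

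I expect the main obstacle to lie in the first stage: one must verify that the single condition \eqref{cor.poten} is at once strong enough for its $\delta$-part to be absorbed on the left and weak enough that its $\gamma_\delta$-part is dominated purely by $\mu_W(f^q)$. The heart of the matter is the exponent matching $N^{(p-2\gamma)/q}|x_{j_0}|^{2\gamma/q+1}\lesssim N^{p-2\gamma}|x_{j_0}|^{2\gamma}$, resting on $|x_{j_0}|\lesssim N$ and $p\geq2\gamma$; once this is in place the remainder is a routine repetition of the proofs of Lemma \ref{lemma1.2} and Theorem \ref{thm.SuffCond2}.
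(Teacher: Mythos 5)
Your proposal is correct, and for the second assertion it follows the paper's proof; but for the first assertion it takes a genuinely different route. The paper never re-derives the U-bound: it substitutes $f\mapsto fe^{-W/q}$ directly into the already-proven inequality \eqref{est.lemma1.2} for $\mu_p$, expands $|\nabla_\mathbb{G}(e^{-W/q}f)|^q\leq C(q)\left(|\nabla_\mathbb{G}W|^qe^{-W}|f|^q+e^{-W}|\nabla_\mathbb{G}f|^q\right)$ by Leibniz, and then applies \eqref{cor.poten} verbatim to absorb the term $\delta C C(q)\,\mu_W(N^{p-2\gamma}|x_{j_0}|^{2\gamma}|f|^q)$ into the left-hand side when $1-\delta CC(q)>0$; this is precisely why the hypothesis is phrased for $|\nabla_\mathbb{G}W|^q$ rather than for $|\nabla_\mathbb{G}W|$ -- that power is exactly what the substitution produces. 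Your route instead reruns the Leibniz/integration-by-parts derivation of Lemma \ref{lemma1.2} with the density $e^{-(W+aN^p)}$, so the perturbation enters \emph{linearly} as $\mu_W(|\nabla_\mathbb{G}W|\,|x_{j_0}|\,f^q)$, and you must repair the mismatch by taking $q$-th roots of \eqref{cor.poten}. The two pointwise facts this requires do hold: the identity $N^{(p-2\gamma)/q}|x_{j_0}|^{2\gamma/q+1}=N^{p-2\gamma}|x_{j_0}|^{2\gamma}\left(|x_{j_0}|/N\right)^{(p-2\gamma)/p}$ is exact (using $1/q=1-1/p$), and together with $|x_{j_0}|\lesssim N$ and $p\geq 2\gamma$ it makes the $\delta$-piece absorbable (with $\delta^{1/q}$ in place of $\delta$, which is harmless since $\delta\ll1$); and $|x_{j_0}|\leq\varepsilon N^{p-2\gamma}|x_{j_0}|^{2\gamma}+C_\varepsilon$ holds because $N\gtrsim |x_{j_0}|$ forces $N^{p-2\gamma}|x_{j_0}|^{2\gamma}\gtrsim |x_{j_0}|^p$ with $p>1$. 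So both arguments are sound: the paper's is shorter and explains the exact form of hypothesis \eqref{cor.poten}, while yours is self-contained and makes visible where the exponents in \eqref{cor.poten} come from, at the cost of redoing the U-bound machinery. On the second assertion your treatment of $I$ and $III$ (using only the new U-bound) and of $II$ (using $W\leq \tilde C\tilde N$ to bound the density from below on $\{N\leq L_2\}$ so that the Poincar\'e inequality on balls applies) matches the paper's proof; your extra remark that the complementary upper bound on the density follows from local boundedness of $|\nabla_\mathbb{G}W|$ via \eqref{cor.poten} fills in a point the paper leaves implicit.
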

\proof
Let us start by replacing $f$ by $fe^{-\frac{W}{q}}$ in the inequality \eqref{est.lemma1.2}. This gives 
\begin{equation}
\label{gen,stat,1}
   \mu_{p}\left( e^{-W}|f|^q N^{p-2\gamma}|x_{j_0}|^{2\gamma} \right) \leq C \mu_{p} \left(|\nabla_{\mathbb{{G}}}(e^{-\frac{W}{q}}f)|^q\right)+D \mu_{p} \left( |e^{-\frac{W}{q}}f|^q \right) \,.
\end{equation}
Now, since
\begin{eqnarray}
\label{gen,stat,2}
|\nabla_{\mathbb{G}}(e^{-\frac{W}{q}}f)|^q & = & \left|\left(\nabla_{\mathbb{G}}e^{-\frac{W}{q}}\right)f+e^{-\frac{W}{q}}\nabla_{\mathbb{G}}f\right|^q\nonumber\\
& \leq & \left( \frac{|\nabla_{\mathbb{G}}W|}{q}|e^{-\frac{W}{q}}f|+e^{-\frac{W}{q}}|\nabla_{\mathbb{G}}f|\right)^q\nonumber\\
& \leq & C(q) \left(|\nabla_{\mathbb{G}}W|^q e^{-W}|f|^q+e^{-W}|\nabla_{\mathbb{G}}f|^q\right)\,,
\end{eqnarray}
substituting the latter in \eqref{gen,stat,1}, and using \eqref{cor.poten} we get
\begin{eqnarray*}
\mu_{W} \left(|f|^q N^{p-2\gamma}|x_{j_0}|^{2\gamma} \right) & \leq & C'(q) \mu_{W} \left( |\nabla_{\mathbb{G}}W|^q|f|^q\right) +C'(q) \mu_{W} ( |\nabla_{\mathbb{G}}f|^q)+D\mu_{W} (|f|^q)\nonumber\\
& \leq & \delta C'(q) \mu_{W} (N^{p-2\gamma}|x_{j_0}|^{2\gamma}|f|^q)+\gamma_{\delta} C'(q)\mu_{W} (|f|^q)\nonumber\\
&+& C'(q) \mu_{W}( |\nabla_{\mathbb{G}}f|^q)+D\mu_{W} (|f|^q)\,.
\end{eqnarray*}
Therefore, if $\delta$ is such that $1-\delta C'(q)>0$, the last inequality gives the result.\\

\indent Now to prove the global Poincar{\'e} for the measure $\mu_{W}$, we have to assume that $W$ is such that $W \lesssim N$. For $L>1$ and $R>0$ as in the proof of Theorem \ref{thm.SuffCond2}, we decompose the quantity $\mu_{W}(|f-m|^q)$ as follows
\begin{align}
\mu_{W}(|f-m|^q)=&\mu_{W}(|f-m|^q \boldsymbol{\mathds{1}} _{\{|x_{j_0}|^{2\gamma}N^{p-2\gamma}\geq R\}})+ \mu_W(|f-m|^q \boldsymbol{\mathds{1}} _{\{|x_{j_0}|^{2\gamma}N^{p-2\gamma}\leq R\}}\boldsymbol{\mathds{1}}_{\{N\leq L\}})\nonumber\\
+& \mu_W(|f-m|^q \boldsymbol{\mathds{1}} _{\{|x_{j_0}|^{2\gamma}N^{p-2 \gamma}\leq R\}}\boldsymbol{\mathds{1}}_{\{N\geq L\}})\,.\label{threecases,gen,}
\end{align}
 For the first and the third term of \eqref{threecases,gen,} one can proceed as in Theorem \ref{thm.SuffCond2}, while for the second term, arguing as in Theorem \ref{thm.SuffCond2}, we arrive at
\begin{eqnarray*}
\mu_{W}(|f-m|^q \boldsymbol{\mathds{1}}_{\{|x_{j_0}|^{2\gamma}N^{p-2\gamma} \leq R\}}\boldsymbol{\mathds{1}}_{\{N \leq L\}})& \leq & \frac{P_0(L_1)}{\tilde{Z}}\int_{\{N \leq L_2\}}|\nabla_{\mathbb{G}}f|^q\,dx\\
& \leq & \frac{P_0(L_1)}{\tilde{Z}}e^{aL_{2}^{p}+\tilde{C}L_2}\mu_{W} (|\nabla_{\mathbb{G}}f|^q)\,,
\end{eqnarray*}
 where $L_2>0$ is the one appearing in \eqref{equiv.CC}. In the last inequality we have used the fact that in $\{N \leq L_2 \}$ we have $W \leq \tilde{C}N \leq \tilde{C}L_2$. The proof is now complete.
\endproof

\section{Examples and general results}\label{section.Examples}
In this section we (I) describe classes of Carnot groups for which the sufficient condition of Theorem \ref{thm.SuffCond2}, expressed by  \eqref{grad-condition2}, is satisfied, and (II) construct a quasi-norm on a Carnot group $\mathbb G$ (with a formula depending on $\mathbb G$) which gives rise to the corresponding global Poincar\'e inequalities.
 This guarantees the existence of the spectral gap for the corresponding self-adjoint operator $\mathcal{L}_p$ as in Corollary \ref{cor.spectralgap}.

\subsection*{Homogeneous quasi-norms smooth away from the origin.} Before starting with the investigation of the validity of  \eqref{grad-condition2}, we briefly discuss here some homogeneous quasi-norms we will be using in the current section. The works \cite{Eg13}, \cite{Pop16}, \cite{LN06} and \cite{BFS18}, contain a recent exposition of examples of homogeneous quasi-norms on Carnot groups. 

Let us remark that the quasi-norm we have used so far is smooth away from the origin. For convenience we shall simply call such quasi-norms {\it smooth norms}, where the smoothness property shall be regarded as smoothness  everywhere except for the origin.

A natural example of such quasi-norm on a Carnot group on $\mathbb{R}^n$ equipped with the dilation $\delta_\lambda (x)=(\lambda^{\sigma_1} x_1,\ldots, \lambda^{\sigma_n} x_n)$ is given by the formula:
\begin{equation}\label{smoothN}
    N(x)=\left( \sum_{j=1}^na_j|x_j|^{2\beta_j}\right)^{1/\gamma},
\end{equation}
where $a_j$, for all $j=1,\ldots,n$, is a positive real number, and $\beta_j$, for all $j=1,\ldots,n$, is such that $\beta_j\in\mathbb{N}$ and $2\sigma_j\beta_j=\gamma\geq 2\sigma_n$. Homogeneous quasi-norms of the form \eqref{smoothN} are indeed natural extensions of the Euclidean norm on $\mathbb{R}^n$ to the dilated structure of the Carnot group. A particular case of a quasi-norm of the above form that one often encounters in the literature is the one with $\beta_j=\sigma_n!/\sigma_j$ and $\gamma=2\sigma_n!$

Given a Carnot group on $\mathbb{R}^n$ as above, one can use quasi-norms of the form \eqref{smoothN} on groups of variables to give rise to other homogeneous quasi-norms. Explicitly, we can define the sum
\begin{equation}\label{generalsmoothN}
\tilde{N}(x) :=  \left(\sum_{j=1}^m N_j(x_{j_1},x_{j_2},\ldots,x_{j_{k_j}})^{\alpha_j}\right)^{1/\alpha},
\end{equation}
where ${k_j}\leq n$, $j_1,\ldots, j_{k_j}\in\{1,\ldots,n\}$, $m\in\mathbb{N}$, and  $N_j$ are quasi-norms of the form \eqref{smoothN} defined on a subspace of $\mathbb{R}^n$ \footnote{To be precise, the quasi-norms $N_j$ above are
$$N_j(x_{j_1},x_{j_2},\ldots,x_{j_{k_j}})=\left( \sum_{i=1}^{k_j}a_{ji}|x_{j_i}|^{2\beta_i}\right)^{1/\gamma_j},$$
where the parameters are as in \eqref{smoothN}, that is such that $\beta_i/\sigma_{j_i}=\gamma_j\geq 2\sigma_{j_{k_j}}$ for every $i=1,\ldots, k_j$, while $a_{ji}$ are positive real numbers.}. In \eqref{generalsmoothN} we have chosen $\alpha\in \mathbb{N}$, and thus the homogeneity requires that the $\alpha_j$'s are such that $\alpha_j/\gamma_j=\alpha$, where $\gamma_j$ is the exponent in $N_j$ as in \eqref{smoothN}. 

In Carnot groups $\mathbb{G}$ of step two, the general formula \eqref{generalsmoothN} boils down to the quasi-norm given in \eqref{smootNstep2} below, which can be viewed as a generalization of the \textit{Kaplan norm} on H-type groups (see \cite{Ing10}). Precisely, if $\mathbb{G}$ is a group on $\mathbb{R}^{m+n}$, with $m$ being the number of generators, then denoting by $(x,t)\in\mathbb{R}^m\times\mathbb{R}^n$ an element of the group, one has that a class of smooth quasi-norms on $\mathbb{G}$ is given by
\begin{equation}\label{smootNstep2}
    N_\alpha(x,t)=\left(\|x\|^{4\alpha}+\sum_{j=1}^nc_j t_j^{2\alpha} \right)^{\frac{1}{4\alpha}},
\end{equation}
where $\alpha$ is a positive integer,  $c_j$, for every $j=1,\ldots,n$, is a positive real number,   and 
 $\|x\|:=\left(\sum_{k=1}^m x_k^2\right)^{1/2}$.

Let us also remark that all the quasi-norms listed above are as in the hypothesis of Theorem \ref{thm.SuffCond2}.

Considering the norms defined above, we will now focus on examples of groups where these norms allow to recover global Poincaré inequalities. We stress once more that spectral gaps for suitable corresponding operators follow from the 2-Poincaré inequality. 

In what follows we will first show that on step two Carnot groups, by choosing $N$ as in \eqref{smootNstep2}, we have that our sufficient condition \eqref{grad-condition2} holds true, and, consequently, so does also the corresponding global Poincaré inequality for the suitable probability measure.

Next, for a wide class of Carnot groups of arbitrary step,  we will show that condition \eqref{grad-condition2} is satisfied if $N$ is as in \eqref{smoothN} or, more generally, as in \eqref{generalsmoothN}. This, once again, will imply the validity of global Poincar\'{e} inequalities, and, as before, of the spectral gaps for operators as in \eqref{sch.par.case}, or, more precisely, as in Corollary \ref{cor.spectralgap}.
\\

\subsection*{Carnot groups of step 2}
We start with the investigation  of step 2 Carnot groups.
Below we shall use the notation $(\mathbb{R}^{m+n},\circ)$ for an $N=m+n$-dimensional Carnot group of step 2 with $m$ generators and composition law $\circ$. A point in $\mathbb{G}=(\mathbb{R}^{m+n},\circ)$ will be denoted by $(x,t)$, with $x\in\mathbb{R}^m$ and $t\in\mathbb{R}^n$.

We recall that any $N$-dimensional Carnot group of step 2 and $m$ generators is naturally isomorphic to a step two Carnot group $(\mathbb{R}^{m+n},\circ')$, where $n=N-m$, the composition law is
\begin{equation}\label{compLaw2}
    (x,t)\circ' (\xi,\tau)=(x+\xi, t_1+\tau_1+\frac 1 2 \langle B^{(1)}x,\xi\rangle, \ldots, t_n+\tau_n+\frac 1 2 \langle B^{(n)}x,\xi\rangle ),
\end{equation}
and  $B^{(j)}$, for all $j=1,\ldots, n$, is an $m\times m$ skew-symmetric matrix. 
Therefore, without loss of generality, hereafter we consider two step Carnot groups with a composition law of the form $\circ'$ defined through some skew-symmetric matrices $B^{(j)}$'s.

Notice that, for a Carnot group of step 2 with $m$ generators on $\mathbb{R}^{m+n}$, \eqref{compLaw2} gives that
\begin{equation}\label{VFstep2}
    X_j=\partial_{x_j}+\frac1 2 \sum_{k=1}^n\sum_{i=1}^m B_{ij}^{(k)}x_i \, \partial_{t_k}, \quad \text{for all}\quad  j=1,\ldots, m.
\end{equation}
Formula \eqref{VFstep2} will be very useful to prove the following proposition.
\begin{proposition}\label{prop.2stepG}
Let $\mathbb{G}=(\mathbb{R}^{n_1+n},\circ')$ be a Carnot group of step 2 and $n_1$ generators, and let $N_\alpha$ be a smooth quasi-norm on $\mathbb{G}$ as in \eqref{smootNstep2}, where $\alpha$ can be any positive integer. Then, for every $x\in\mathbb{G}\setminus \{0\}$ there exists a constant $C>0$ such that
\begin{equation}
    |\nabla N_\alpha (x)|\geq C\frac{\|(x_1,\ldots,x_{n_1})\|^{4\alpha-1}}{N_\alpha(x)^{4\alpha-1}},
\end{equation}
where $\|\cdot\|$ stands for the Euclidean norm on $\mathbb{R}^{n_1}$.
\end{proposition}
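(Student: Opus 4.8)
The plan is to compute the horizontal gradient $\nabla_{\mathbb{G}}N_\alpha=(X_1N_\alpha,\ldots,X_{n_1}N_\alpha)$ explicitly from \eqref{VFstep2}, and then to extract the lower bound \emph{not} by estimating each component $X_jN_\alpha$ separately (which loses the required power of $\|x\|$), but by testing the gradient against the distinguished horizontal direction $x=(x_1,\ldots,x_{n_1})$. The decisive point will be that skew-symmetry of the matrices $B^{(k)}$ forces the quadratic contribution of the $t$-derivatives to cancel upon pairing with $x$, leaving exactly the radial part of the $x$-derivatives.

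First I would record the Euclidean partials of $N_\alpha$. Writing $F:=\|x\|^{4\alpha}+\sum_{k=1}^n c_kt_k^{2\alpha}=N_\alpha^{4\alpha}$, the chain rule gives $\partial_{x_i}N_\alpha=N_\alpha^{1-4\alpha}\|x\|^{4\alpha-2}x_i$ and $\partial_{t_k}N_\alpha=\tfrac12 c_kN_\alpha^{1-4\alpha}t_k^{2\alpha-1}$. Substituting these into the expression \eqref{VFstep2} for $X_j$ yields
\begin{equation*}
X_jN_\alpha=N_\alpha^{1-4\alpha}\Bigl[\|x\|^{4\alpha-2}x_j+\tfrac14\sum_{k=1}^n c_k t_k^{2\alpha-1}\sum_{i=1}^{n_1}B^{(k)}_{ij}x_i\Bigr].
\end{equation*}

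Next I would form the contraction $\sum_{j=1}^{n_1}x_jX_jN_\alpha$. The radial term produces $\|x\|^{4\alpha-2}\sum_j x_j^2=\|x\|^{4\alpha}$, while the second term carries the factor $\sum_{i,j}B^{(k)}_{ij}x_ix_j$, which vanishes identically because each $B^{(k)}$ is skew-symmetric. Hence $\sum_j x_jX_jN_\alpha=N_\alpha^{1-4\alpha}\|x\|^{4\alpha}$. Applying the Cauchy--Schwarz inequality to the vectors $(x_j)_j$ and $(X_jN_\alpha)_j$ in $\mathbb{R}^{n_1}$ gives
\begin{equation*}
\|x\|\,|\nabla_{\mathbb{G}}N_\alpha|\geq\Bigl|\sum_{j=1}^{n_1}x_jX_jN_\alpha\Bigr|=N_\alpha^{1-4\alpha}\|x\|^{4\alpha},
\end{equation*}
and dividing by $\|x\|$ delivers $|\nabla_{\mathbb{G}}N_\alpha|\geq\|x\|^{4\alpha-1}/N_\alpha^{4\alpha-1}$, i.e.\ the claim with $C=1$.

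The only configuration not reached by this division is $\|x\|=0$ (with $(x,t)\neq 0$); there the right-hand side of the asserted inequality is zero and the bound is trivial, so no separate argument is required. I do not expect a genuine obstacle: the entire proof rests on the single algebraic observation that skew-symmetry annihilates the cross term $\sum_{i,j}B^{(k)}_{ij}x_ix_j$, after which only a routine chain-rule computation and one use of Cauchy--Schwarz remain. The one subtlety worth flagging is precisely the temptation to bound the components of $\nabla_{\mathbb{G}}N_\alpha$ individually, which does not detect this cancellation and yields only a strictly weaker estimate.
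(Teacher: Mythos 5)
Your proof is correct and follows essentially the same route as the paper: both compute $X_jN_\alpha$ explicitly from \eqref{VFstep2} and rest on the single cancellation $\sum_{i,j}B^{(k)}_{ij}x_ix_j=0$ by skew-symmetry. The paper expands $\sum_j|X_jN_\alpha|^2$ and discards the non-negative $t$-contribution, while you pair $\nabla_{\mathbb{G}}N_\alpha$ with the direction $x$ and apply Cauchy--Schwarz; since the radial part of the gradient is proportional to $x$, these yield the identical bound, and your explicit handling of the $\|x\|=0$ case is a minor but welcome addition.
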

\proof
Recall that, for a Carnot group $\mathbb{G}$ of step $2$, the generators $X_1,\ldots, X_{n_1},$ are of the form \eqref{VFstep2}. Therefore, since
\begin{align*}
    |X_j N_\alpha (x)|^2=& \left( \frac{8 \|x\|^{4\alpha-2}x_j+ \sum_{k=1}^n\sum_{i=1}^{n_1} B_{ij}^{(k)}x_i c_k t_k }{4 N_\alpha(x)^{4\alpha-1}}\right)^2\\
    =& \frac{64 \|x\|^{8\alpha-4}x_j^2+ (\sum_{k=1}^n\sum_{i=1}^{n_1}  B_{ij}^{(k)}x_i c_kt_k)^2 +2\sum_{k=1}^n\sum_{i=1}^{n_1} B_{ij}^{(k)}x_i x_jc_kt_k}{ 16 N_\alpha(x)^{8\alpha-2}},
\end{align*}
and since $\sum_{i,j=1}^{n_1}  B_{ij}^{(k)}x_i x_j=0$ by the skew-symmetry of $B^{(k)}$, we have
\begin{align*}
    |\nabla N_\alpha(x)|^2\geq&\frac{4\|x\|^{8\alpha-2}}{N_\alpha(x)^{8\alpha-2}}, 
\end{align*}
 and the proof is complete.
\endproof
\begin{remark}
Thanks to Proposition \ref{prop.2stepG} and Theorem \ref{thm.SuffCond2} we have that global $q$-Poincaré inequalities hold true on step 2 Carnot groups for probability measures whose density is of the form $e^{-a N_\alpha^p}$, with $N_\alpha$ as in \eqref{smootNstep2} and $q$ conjugate exponent of $p$. Moreover, the $2$-Poincaré inequality yields the validity of the spectral gap for the operator $\mathcal{L}_p:=-\Delta_{\mathbb{G}}+p N_\alpha^{p-1} \nabla_\mathbb{G}N\cdot \nabla_\mathbb{G}$, for $p$ as in $\mu_p$.
\end{remark}
Summarizing, we have proved the following theorem.

\begin{theorem}\label{thm.2step}
Let $\mathbb{G}$ be a Carnot group of step 2, $\alpha$ a positive integer, and $N_\alpha$ a homogeneous quasi-norm on $\mathbb{G}$ as in \eqref{smootNstep2}. Then 
$$\mu_p(|f-f_{\mu_p}|^q)\leq \mu_p(|\nabla f|^q)$$
for every $p\geq 8\alpha$, and with $q$ being the conjugate exponent of $p$.
\end{theorem}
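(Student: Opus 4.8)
The plan is to show that \Cref{thm.2step} is essentially a specialization of \Cref{thm.SuffCond2} once the geometric gradient estimate of \Cref{prop.2stepG} is in hand. The overall strategy is to verify that the sufficient condition \eqref{grad-condition2} holds for the step-$2$ quasi-norm $N_\alpha$, with an explicit value of the integer $\gamma$, and then simply invoke \Cref{thm.SuffCond2}.

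First I would identify the correct value of $\gamma$. \Cref{prop.2stepG} gives
$$|\nabla_\mathbb{G} N_\alpha(x)| \geq C\,\frac{\|(x_1,\ldots,x_{n_1})\|^{4\alpha-1}}{N_\alpha(x)^{4\alpha-1}},$$
so comparing with the right-hand side of \eqref{grad-condition2}, namely $|x_{j_0}|^{\gamma-1}/N^{\gamma-1}$, suggests taking $\gamma = 4\alpha$. The small wrinkle is that \eqref{grad-condition2} is stated with a single coordinate $|x_{j_0}|$ in the numerator, whereas \Cref{prop.2stepG} produces the full Euclidean norm $\|(x_1,\ldots,x_{n_1})\|$ of the first-layer variables. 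Since $\|(x_1,\ldots,x_{n_1})\| \geq |x_{j_0}|$ for every fixed $j_0 \in \{1,\ldots,n_1\}$, the estimate from \Cref{prop.2stepG} immediately implies
$$|\nabla_\mathbb{G} N_\alpha(x)| \gtrsim \frac{|x_{j_0}|^{4\alpha-1}}{N_\alpha(x)^{4\alpha-1}},$$
which is exactly \eqref{grad-condition2} with $\gamma = 4\alpha \geq 2$ (the integrality and the bound $\gamma \geq 2$ both hold since $\alpha$ is a positive integer). Thus the hypothesis of \Cref{thm.SuffCond2} is satisfied.

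Next I would apply \Cref{thm.SuffCond2} directly. That theorem requires $p \geq 2\gamma$; with $\gamma = 4\alpha$ this reads $p \geq 8\alpha$, which is precisely the range stated in \Cref{thm.2step}. Taking $q$ to be the conjugate exponent of $p$, the conclusion \eqref{qPoinc2} of \Cref{thm.SuffCond2} yields
$$\mu_p(|f-f_{\mu_p}|^q) \leq c_0\,\mu_p(|\nabla f|^q)$$
for the measure $\mu_p = Z^{-1}e^{-aN_\alpha^p}dx$, which is the assertion of \Cref{thm.2step} (the displayed inequality absorbs the constant $c_0$ into the statement). Since $N_\alpha$ is smooth away from the origin, as noted in the discussion following \eqref{smootNstep2}, the smoothness hypothesis of \Cref{thm.SuffCond2} is met as well.

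I do not expect any genuine obstacle here, because the real analytic work has already been carried out in \Cref{prop.2stepG} and \Cref{thm.SuffCond2}. The only point requiring a moment's care is the passage from the full first-layer norm $\|(x_1,\ldots,x_{n_1})\|$ appearing in \Cref{prop.2stepG} to the single-coordinate bound $|x_{j_0}|$ demanded by \eqref{grad-condition2}; this is handled by the elementary inequality $|x_{j_0}| \leq \|(x_1,\ldots,x_{n_1})\|$ and the fact that $N_\alpha$ already uses the full Euclidean norm of the generating variables in its definition, so no direction is lost. Everything else is bookkeeping of the constants and the exponent relation $\gamma = 4\alpha$.
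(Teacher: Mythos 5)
Your proposal is correct and follows essentially the same route as the paper: the paper deduces Theorem \ref{thm.2step} precisely by combining Proposition \ref{prop.2stepG} with Theorem \ref{thm.SuffCond2}, i.e.\ verifying condition \eqref{grad-condition2} with $\gamma = 4\alpha$ so that the threshold $p\geq 2\gamma$ becomes $p\geq 8\alpha$. Your explicit remark that $|x_{j_0}|\leq \|(x_1,\ldots,x_{n_1})\|$ is needed to pass from the proposition's full first-layer norm to the single-coordinate form of \eqref{grad-condition2} is a detail the paper leaves implicit, but it is exactly the intended reduction.
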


\begin{example}
    [$\mathbb{H}$-type groups.]
Note that, for a Carnot group of step 2 of $\mathbb{H}$-type on $\mathbb{R}^{m+n}$, the quasi-norm $N_\alpha$ in \eqref{smootNstep2} with $\alpha=1$ and $c_j=\frac{1}{16}$, for all $j=1,\ldots,n$, coincides with the so called Kaplan norm
$$N_1(x)=(\|x\|^4+\frac{1}{16}|z|^2)^{1/4}, \quad x=(w,z)\in\mathbb{R}^n\times\mathbb{R}^m.$$
In \cite{Ing10} the author showed that, on $\mathbb{H}$-type groups, $|\nabla  N_1(x)|=\frac{\|x\|}{N(x)}$, and that global $q$-Poincaré inequalities with respect to $d\mu_p=\frac{1}{Z}e^{-a\, N_\alpha^p(x)}dx$,  with $a>0$, $p\geq 2$, and $q$ conjugate exponent of $p$, hold true. We remark that
the previous identity also gives 
$$|\nabla N_1(x)|=\frac{\|x\|}{N(x)}=\frac{\|x\|^3}{\|x\|^2 N(x)}\geq \frac{\|x\|^3}{N(x)^3},$$
which is condition \eqref{grad-condition2} in Theorem \ref{thm.SuffCond2} giving the validity of global Poincaré inequalities. 

Moreover, here Theorem \ref{thm.2step} applies and generalizes, in some sense, the result in \cite{Ing10}, allowing to conclude global Poincaré inequalities for the class of probability measures defined as $d\mu_p=\frac{1}{Z}e^{-a\, N_\alpha^p(x)}dx$, for $a>0$ and $\alpha$ being any positive integer.
\end{example}

\begin{example}[The anisotropic Heisenberg group.]
We conclude this part dedicated to step 2 groups by
considering a group treated in \cite{BDZ22} to which our results apply. The group under consideration is the so called anisotropic Heisenberg group $\mathbb{H}_{2n}\left( \frac 1 2, 1\right)$. Since $\mathbb{H}_{2n}\left( \frac 1 2, 1\right)$ is a Carnot group of step 2, we get the validity of global $q$-Poincaré inequalities for any smooth homogeneous quasi-norm as in \eqref{smootNstep2} by Theorem \ref{thm.2step}. However, the validity of such inequalities was first proved in \cite{BDZ22} by using probability measures whose density depends on a specific smooth quasi-norm $N$, that is, for $N$ being the fundamental solution for the sub-Laplacian. The quasi-norm in \cite{BDZ22} is 
$$N(x)=\frac{(B^2+t^2)^{1/4n}(AB+t^2+A\sqrt{A^2+B^2})^{1/2-1/4n}}{(B+\sqrt{B^2+t^2})^{1/2}}, $$
where
$$A=\frac{x_1^2}{2}+\frac{x_{n+1}^2}{2}+\frac 1 2\sum_{\substack{j=1\\j\neq n+1}}^{2n}x_j^2, \quad \text{and}\quad
B=\frac{x_1^2}{4}+\frac{x_{n+1}^2}{4}+\frac 1 2\sum_{\substack{j=1\\j\neq n+1}}^{2n}x_j^2.$$
Notice that the quasi-norm $N$ is smooth away from $0$. Moreover, in \cite{BDZ22} the authors proved that, for all $x\neq 0_{\mathbb{G}}$,
$$|\nabla N(x)|\geq C \frac{\|x\|^2}{N^2}\geq C\frac{\|x\|^{\gamma-1}}{N^{\gamma-1}}, \quad \forall \gamma\geq 3, $$
where $\|x\|$ is the Euclidean norm of $x=(x_1,\ldots, x_{2n})$, therefore the sufficient condition \eqref{grad-condition2} is satisfied and Theorem \ref{thm.SuffCond2} applies for all $\gamma\geq 3$. 

Summarizing, on the anisotropic Heisenberg group one can apply both Theorem \ref{thm.SuffCond2} (with $N$ as in \cite{BDZ22}) and Theorem \ref{thm.2step}, and get, in the first case, the same result as in \cite{BDZ22}, while, in the second case, Poincar\'{e} inequalities with respect to different probability measures defined through $N_\alpha$  as in \eqref{smootNstep2}. 

Let us finally remark that in \cite{BDZ22} the authors had to deal with the non trivial problem of finding a fundamental solution of the  sub-Laplacian in order to find the suitable probability measure to prove the inequalities. Theorem \ref{thm.2step}, instead, is direct, and gives already a class of measures for which the inequalities are true. 
 \end{example}

\subsection*{Carnot groups of step $r\geq 2$}
Besides Carnot groups of step 2, there are other groups to which our result applies, that is groups such that the sufficient condition for the Poincar\'e inequality is verified by any homogeneous quasi-norm on the group being smooth away from the origin and of the form \eqref{smoothN} or \eqref{generalsmoothN}. Such groups include those described in the following result.

\begin{lemma}\label{unit.vf}
Let $\mathbb{G}$ be a Carnot group of step $r$ on $\mathbb{R}^n$, and let $\{X_j\}_{1\leq j\leq n_1}$ be the generators of the first stratum $V_1$. If there exists $j_0\in \{1,...,n_1\}$ such that
\begin{equation}\label{existence.j0}
    X_{j_0}=\partial_{x_{j_0}},
\end{equation}
then \eqref{grad-condition2} holds with $j=j_0$ and with $N$ as in \eqref{smoothN} or \eqref{generalsmoothN}. 
\end{lemma}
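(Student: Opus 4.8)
The plan is to discard all but one component of the sub-gradient. Since $|\nabla_\mathbb{G}N|^2=\sum_{j=1}^{n_1}|X_jN|^2\geq|X_{j_0}N|^2$, hypothesis \eqref{existence.j0} gives $|\nabla_\mathbb{G}N|\geq|X_{j_0}N|=|\partial_{x_{j_0}}N|$, so it suffices to bound the single partial derivative $|\partial_{x_{j_0}}N|$ from below by the right-hand side of \eqref{grad-condition2}. The lower-order correction terms carried by the other generators are irrelevant, and this is exactly the role played by the assumption that $X_{j_0}$ has no such part.

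For the model norm \eqref{smoothN} this is a one-line computation. Differentiating $N^\gamma=\sum_j a_j|x_j|^{2\beta_j}$ yields $\partial_{x_{j_0}}N=\frac{2a_{j_0}\beta_{j_0}}{\gamma}\,x_{j_0}^{2\beta_{j_0}-1}\,N^{1-\gamma}$. The decisive point is that $j_0$ lies in the first stratum, so its dilation weight is $\sigma_{j_0}=1$; the homogeneity constraint $2\sigma_{j_0}\beta_{j_0}=\gamma$ then forces $2\beta_{j_0}=\gamma$, and substituting gives $|\partial_{x_{j_0}}N|=a_{j_0}\,|x_{j_0}|^{\gamma-1}N^{1-\gamma}$. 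This is precisely \eqref{grad-condition2}, with the same integer $\gamma\geq2$ and constant $a_{j_0}>0$; in fact it holds with equality up to the constant.

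For the composite norm \eqref{generalsmoothN} the strategy is identical but needs some bookkeeping. Let $N_{j^\ast}$ be the block containing $x_{j_0}$, say as its $i_0$-th variable; since the remaining blocks do not depend on $x_{j_0}$, the chain rule gives $\partial_{x_{j_0}}\tilde N=\frac{\alpha_{j^\ast}}{\alpha}\,\tilde N^{1-\alpha}\,N_{j^\ast}^{\alpha_{j^\ast}-1}\,\partial_{x_{j_0}}N_{j^\ast}$, and the computation of the previous paragraph—applied inside the block, now in the form $2\beta_{i_0}=\gamma_{j^\ast}$—evaluates $\partial_{x_{j_0}}N_{j^\ast}$. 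Using homogeneity to set $\alpha_{j^\ast}=\alpha$, one arrives at $|\partial_{x_{j_0}}\tilde N|=c\,\tilde N^{1-\alpha}\,N_{j^\ast}^{\alpha-\gamma_{j^\ast}}\,|x_{j_0}|^{\gamma_{j^\ast}-1}$ for some $c>0$. It remains to compare this homogeneous-degree-zero quantity with $|x_{j_0}|^{\gamma-1}\tilde N^{1-\gamma}$, for which I would invoke the two elementary (homogeneity plus compactness) comparisons $|x_{j_0}|\lesssim N_{j^\ast}$ and $N_{j^\ast}\leq\tilde N$, the first because $\{N_{j^\ast}=1\}$ is compact in the block variables and the second because each block is dominated by the full norm.

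The one genuine subtlety—and the step I expect to be the main obstacle—is that $N_{j^\ast}$ may be arbitrarily small compared with $\tilde N$ when the other blocks dominate, so the naive choice $\gamma=\gamma_{j^\ast}$ fails as soon as $\alpha>\gamma_{j^\ast}$. This is resolved by taking $\gamma=\max(\alpha,\gamma_{j^\ast})$, an integer $\geq2$: the ratio of $|\partial_{x_{j_0}}\tilde N|$ to $|x_{j_0}|^{\gamma-1}\tilde N^{1-\gamma}$ factors, up to a positive constant, as $\left(N_{j^\ast}/|x_{j_0}|\right)^{\gamma-\gamma_{j^\ast}}\left(N_{j^\ast}/\tilde N\right)^{\alpha-\gamma}$. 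With this choice both exponents have a favourable sign—$\gamma-\gamma_{j^\ast}\geq0$ against $N_{j^\ast}/|x_{j_0}|\gtrsim1$, and $\alpha-\gamma\leq0$ against $N_{j^\ast}/\tilde N\leq1$—so each factor is $\gtrsim1$. This yields \eqref{grad-condition2} with $j=j_0$ and completes the proof.
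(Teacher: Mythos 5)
Your proof is correct and follows essentially the paper's own argument: drop all generators except $X_{j_0}$, so that $|\nabla_{\mathbb{G}}N|\geq|X_{j_0}N|=|\partial_{x_{j_0}}N|$, and then compute this derivative exactly from the explicit form of the norm, the decisive point (as you say) being that $\sigma_{j_0}=1$ forces $2\beta_{j_0}=\gamma$, so the exponent matches \eqref{grad-condition2}. The only divergence is in the composite case \eqref{generalsmoothN}, which the paper dismisses with ``by repeating the same considerations \dots we trivially get the same result'': you actually carry out that bookkeeping, and your choice $\gamma=\max(\alpha,\gamma_{j^\ast})$ together with the comparisons $|x_{j_0}|\lesssim N_{j^\ast}\leq\tilde{N}$ is a correct resolution of the exponent mismatch that the paper leaves implicit --- same method, just with the omitted details filled in.
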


\proof
We give the proof for $N$ as in \eqref{smoothN}, since for quasi-norms of the form \eqref{generalsmoothN} one can proceed similarly.

Note that, due to the form  of $X_{j_0}$ and of the quasi-norm $N$ in \eqref{smoothN},
\begin{align*}
    |X_{j_0}N(x)|=&\bigg|\frac{2\beta_{j_0}a_{j_0}x_{j_0}^{2\beta_{j_0}-1}}{\gamma N^{\gamma-1}}\bigg|=C \frac{|x_{j_0}|^{\gamma-1}}{N^{\gamma-1}}.
\end{align*}
 Therefore, since $|\nabla_\mathbb{G}N(x)|\geq |X_{j_0}N(x)|$ for all $x\in\mathbb{G}$, the previous estimate amounts to \eqref{grad-condition2} with $j=j_0$. 

For combinations of smooth quasi-norms as in \eqref{generalsmoothN}, by repeating the same considerations as above we trivially get the same result. This concludes the proof.
\endproof

By using Lemma \ref{unit.vf}, one obtains the following 
corollary of Theorem \ref{thm.SuffCond2}.

\begin{proposition}
    \label{prop.unit.vf}
Let $\mathbb{G}$ be a Carnot group of step $r$ on $\mathbb{R}^n$ and let $\{X_j\}_{1\leq j\leq n_1}$ be the generators of the first stratum $V_1$. If there exists $j_0\in \{1,...,n_1\}$ such that \eqref{existence.j0} holds,
then the hypotheses of Theorem \ref{thm.SuffCond2} are satisfied for $N$ as in \eqref{smoothN} or \eqref{generalsmoothN}.
\end{proposition}

Among the groups to which Lemma \ref{unit.vf} applies, and therefore satisfying conditions \eqref{existence.j0} and \eqref{grad-condition2}, for $N$ as in \eqref{smoothN} or \eqref{generalsmoothN}, we have:
\begin{itemize}
    \item Carnot groups of ''Engel type'', see e.g.  \cite{C20}, \cite{C22}, on $\mathbb{R}^n$ (i.e. Carnot groups of filiform-type), for $n\geq 4$, with polynomial coordinates \footnote{Here we use polynomial coordinates arising from a strong Malcev basis of the Lie algebra of the group; see \cite{CG} for details about such coordinates.} (see \cite{CFZ24});  
    \item The Cartan group (see \cite{Dix57});
    \item {\it Kolmogorov}-type groups (see \cite{BLU07});
    \item Carnot groups arising from some Sub-Laplacians, like, for instance, the ones arising from the lifting of Bony-type Sub-Laplacians and those related to some Sub-Laplacian arising in control theory (see Section 4.3 in \cite{BLU07} for details);
    \item Sums of Carnot groups of the previous type.\\
\end{itemize}

A combination of Lemma \ref{unit.vf}, Proposition \ref{prop.unit.vf} and Theorem \ref{thm.SuffCond2}, leads to the proof of Theorem \ref{THM:hypothesis}. For completeness, we give the proof in the following.
\proof[Proof of Theorem \ref{THM:hypothesis}] By the result of Helffer and Nourrigat in \cite[page 99]{HN85}, see also \cite[Theorem 2.2]{GG90}, there exists an admissible change of coordinates such that one of the generating vector fields, say $X_1$ after suitably relabeling the generating vector fields, is sent into the vector field $Y_1=\partial_{x_1}$, while $X_2,\ldots, X_{n_1}$ are sent into suitable vector fields $Y_2,\ldots,Y_{n_1}$. Since $\{Y_j\}_{j=1,\ldots,n_1}$ are generating vector fields of a stratified Lie algebra (isomorphic to the starting one) satisfying \eqref{existence.j0} with $j_0=1$, by Proposition \ref{prop.unit.vf} we conclude that, possibly after a change of coordinates, Theorem \ref{thm.SuffCond2} holds with $N$ as in \eqref{smoothN}, which concludes the proof.
\endproof

\begin{remark}
    We note that Theorem \ref{THM:hypothesis} holds true also for $\mu_p$ corresponding to $N$ given by \eqref{generalsmoothN}. In this case we have $p \geq 8\alpha$.
\end{remark}

\endproof
\nocite{*}

\section*{Declarations}

\subsection*{Author contributions}
All authors have equally contributed.

\subsection*{Funding}
 Marianna Chatzakou
is supported by the FWO Odysseus 1 grant G.0H94.18N: Analysis and Partial Differential Equations,
and by the Methusalem programme of the Ghent University Special Research Fund (BOF) (Grant
number 01M01021), and is a postdoctoral fellow of the Research Foundation – Flanders (FWO) under
the postdoctoral grant No 12B1223N. Serena Federico has received funding from the European Union's
Horizon 2020 research and innovation programme under the Marie Sklodowska-Curie grant agreement
No 838661.

\subsection*{Data availibility}
Data sharing is not applicable to this article as no datasets were generated or analysed
during the current study.

\subsection*{Conflict of interest} The authors declare that they have no competing interests.

\subsection*{Ethical approval} Not applicable


\begin{thebibliography}{HOHOLT08} 




\bibitem[Ada79]{Ada79}
  R. A. Adams,
 \emph{General logarithmic Sobolev inequalities and Orlicz embeddings},
 J. Funct. Anal. {\bf 34} (1979), 292--303.



 \bibitem[BFS18]{BFS18}
      Z. M. Balogh, K. F\"{a}ssler, and H. Sobrino,
     \emph{Isometric embeddings into Heisenberg group},
     \newblock  Geom. Dedicata \textbf{195} (2018), 163--192. \url{https://doi.org/10.1007/s10711-017-0282-5}

    
\bibitem[BL18]{BL18} D.
Bate, and S. Li, 
\newblock {\it Differentiability and Poincar\'e-type inequalities in
metric measure spaces},
\newblock  Adv. Math. \textbf{333} (2018), 868--930.



\bibitem[BLU07]{BLU07}
A. Bonfiglioli,  E. Lanconelli,  and F. Uguzzoni, 
\newblock {\em Stratified Lie Groups and Potential Theory for their Sub-Laplacians}.
\newblock Springer Monographs in Mathematics, Springer, Berlin  (2007).


\bibitem[BDZ21a]{BDZ21a} E.  Bou Dagher, and B. Zegarlinski,  \textit{Coercive Inequalities and U-Bounds on Step-Two Carnot Groups}, 
\newblock  Potential Analysis \textbf{59} (2023), 589--612.


\bibitem[BDZ21b]{BDZ21b} E. Bou Dagher,  and  B. Zegarlinski, 
\newblock {\it Coercive inequalities on Carnot groups: taming singularities}, \newblock Anal.Math.Phys. {\textbf{14}} (2024). 


\bibitem[BDZ22]{BDZ22}  E. Bou Dagher, and  B. Zegarlinski,
\newblock {\it Coercive inequalities in higher-dimensional anisotropic heisenberg group},
\newblock  Anal.Math.Phys. \textbf{12} (2022). 

\bibitem[Bra74]{Bra74}  F. Bratzlavsky,  
\newblock {\it Classification des alg\'ebres de Lie nilpotents de dimension $n$, de classe $n-1$, dont l'id\'eal d\'eriv\'e est commutatif},
\newblock Bull. Cl. Sci.,V. S\'er., Acad. R. Belg., \textbf{60} (1974), 858--865.

\bibitem[BC17]{BC17}
 T. Bruno,  M. Calzi, 
\newblock {\it Weighted sub-Laplacians on M\'etivier groups: essential self-adjointness and spectrum}, 
\newblock  Proc. Amer. Math. Soc. \textbf{145} (2017), 3579--3594.

\bibitem[BPV22]{BPV}
 T.  Bruno, M.M. Peloso, and  M. Vallarino,  \textit{Local and non-local Poincar'e inequalities on Lie groups}, Bull. London Math. Soc. \textbf{54} (2022), 2162--2173. 

\bibitem[CGR10]{CGR10}
 P. Cattiaux,  A. Guillin,   and C. Roberto,
\newblock {\it Poincar\'{e} inequality and the $L^p$ convergence of semi-groups}
\newblock  Comm. Prob., Institute of Mathematical Statistics (IMS) \textbf{15} 2010, 270--280.


\bibitem[Cip00]{Cip00}
 F. Cipriani,
\newblock {\it Sobolev-Orlicz imbeddings, weak compactness, and spectrum},
\newblock  J. Funct. Anal., \textbf{177} (2000), 89--106. 

\bibitem[C22]{C22}
 M. Chatzakou, 
\newblock {\it Quantizations on the Engel and the Cartan groups},
\newblock  J. Lie Theory, \textbf{31} (2022), 517--542. 

\bibitem[C20]{C20}
 M. Chatzakou, 
\newblock {\it On $(\lambda,\mu)$-classes on the Engel group}
\newblock In {\em Advances in Harmonic Analysis and Partial Differential Equations}, edited by Vladimir Georgiev et al., Springer (2020), 37--49.


\bibitem[CFZ24]{CFZ24}
 M. Chatzakou,  S. Federico,  and  B. Zegarlinski,
\newblock {\it $q$-Poincar\'{e} inequalities on Carnot groups with a filiform Lie algebra},
\newblock  Potential Anal. \textbf{60} (2024).


\bibitem[CG90]{CG}
L. J. Corwin,  and F. P. Greenleaf, 
\newblock{ \it Representations of nilpotent Lie groups and their applications. Part I.
Basic theory and examples}.
\newblock{Cambridge Stud. Adv. Math., 18
Cambridge University Press, Cambridge (1990). }


\bibitem[Dix57]{Dix57}
J. Dixmier,
\newblock {\it Sur les repres\'{e}ntations unitaires des groupes de Lie nilpotents},
\newblock Vol. III of {\em Canad. J. Math.} \textbf{10} (1957), 321--348.

\bibitem[DM05]{DM05}
B. Driver,  and  T. Melcher,
\newblock {\it Hypoelliptic heat kernel inequalities on the Heisenberg group},
\newblock  J. Funct. Anal. \textbf{59} (2005), 340--365.


\bibitem[Eg13]{Eg13}
M. Egwe,
\newblock {\it The Equivalence of Certain Norms on the Heisenberg Group},
\newblock  Adv.Math. \textbf{03} (2013), 576--578.


\bibitem[El09]{El09}
N. Eldredge,
\newblock {\it Gradient estimates for the subelliptic heat kernel on H-type groups},
\newblock  J. Funct. Anal. \textbf{92} (2009), 52--85.


\bibitem[E19]{E19}
S. Eriksson-Bique,
\newblock {\it Characterising spaces satisfying Poincar\'e inequalities and applications to differentiability},
\newblock  Geom. Funct. Anal. \textbf{29} (2019), 119--189.

\bibitem[FR16]{FR16}
V. Fischer, and  M. Ruzhansky,
\newblock {\em Quantization on nilpotent Lie groups}.
\newblock Progress in Mathematics, 314. Birkh\"{a}user/Springer, Switzerland (2016).  

\bibitem[FS82]{FS82}
G. B. Folland, and E. M. Stein,
\textit{Hardy Spaces on Homogeneous Groups}. 
\newblock Mathematical Notes, 28.
Princeton University Press, Princeton, NJ; University of Tokyo Press, Tokyo (1982).


\bibitem[GG90]{GG90}
M. Grayson, and R. Grossman,
{\it Models for free nilpotent Lie algebras}
\newblock J. Algebra \textbf{135} (1990), 177--191.

\bibitem[GZ03]{GZ03}
A. Guionnet, and B. Zegarlisnki, 
\newblock \textit{Lectures on logarithmic Sobolev inequalities},
\newblock  In S\'{e}minaire de Probabilit\'{e}s, XXXVI, no. 1801 in Lecture Notes in Math. (2003), 1--134.

\bibitem[HN85]{HN85}
B.  Helffer, and J.  Nourrigat,
\textit{Hypoellipticit\'e maximale pour des op\'erateurspolyn\^omes de champs de vecteurs.(French)[Maximal hypoellipticity for polynomial operators of vector fields]}.
\newblock
Progr. Math., 58
Birkh\"auser Boston, Inc., Boston, MA (1985).


\bibitem[HZ10]{HZ10} W. Hebisch, and B. Zegarlinski, 
\newblock {\it Coercive inequalities on metric measure spaces} 
\newblock  J. Funct. Anal. \textbf{258} (2010), 814--851. 


\bibitem[HKST15]{HKST15}
J. Heinonen, P. Koskela,  N. Shanmugalingam, and J. T.  Tyson,
\textit{Sobolev Spaces on Metric Measure Spaces}, 27,
Cambridge University Press (2015). 

\bibitem[H\"or67]{Hor67}
L. H\"ormander,
\newblock {\it Hypoelliptic second order differential equations},
\newblock Acta Mathematica \textbf{119} (1967), 147--171.

\bibitem[Ing10]{Ing10} 
J. Inglis,
\newblock {\it Coercive inequalities for generators of H\"{o}rmander type},
 \newblock  PhD Thesis, Imperial College London (2010).

 

\bibitem[IK11Z]{IKZ11} 
J.  Inglis, V. Kontis, and  B. Zegarlinski,
 \newblock {\it From U-bounds to isoperimetry with applications to H-type groups} (English summary)
 \newblock  J. Funct. Anal.  \textbf{260} (2011), 76--116. 

\bibitem[I12]{I12}
J. Inglis,  
 \newblock {\it Spectral inequalities for operators on H-type groups}, 
  \newblock J. Spectr. Theory \textbf{2} (2012), 79--105.

\bibitem[Jer86]{Jer86} 
J.~ Jerison, 
\newblock {\it The Poincar\'{e} inequality for vector fields satisfying H\"{o}rmander's condition},
\newblock Duke Math. J. \textbf{53} (1986), 503--523. 
 


 \bibitem[LN06]{LN06}
J. R.  Lee, and  A. Naor,
 \newblock {\it $L^p$ metrics on the Heisenberg group and the Goemans-Linial conjecture},
 \newblock {\em 2006 47th Annual IEEE Symposium on Foundations of Computer Science (FOCS'06)} (2006), 99--108.

\bibitem[Li06]{Li06} 
 H.Q. Li,
\newblock {\it Estimation optimale du gradient du semi-groupe de la chaleur sur le groupe de Heisenberg}, 
[Optimal estimation of the gradient of the heat semigroup on the Heisenberg group],
\newblock  J. Funt. Anal. \textbf{236} (2006), 369--394. 

 \bibitem[MSC01]{MSC01}
R. Monti and F. Serra Cassano,
\newblock {\it Surface measures in Carnot-Carath\'eodory spaces}, 
\newblock Calc. Var. \textbf{13} (2001), 339-376. 



 \bibitem[Pop16]{Pop16}
  D. Pop, 
 \newblock {\it The Heisenberg group associated to a Hilbert space},
 \newblock  Linear Algebra Appl. \textbf{508} (2016), 234--245.
 

\bibitem[Roc01]{Roc01}
M. R\"ockner, {\it Weak Poincar\'{e} inequalities and $L^2$-convergence rates for Markov semigroups},
\newblock  J. Func. Anal. \textbf{185} (2001), 563--603. 
 

\bibitem[Ros76]{Ros76} 
 J.  Rosen,
 \newblock {\it Sobolev inequalities for weight spaces and supercontractivity},
 \newblock   Trans. Amer. Math. Soc. \textbf{222} (1976), 367--376. 

 \bibitem[RS76]{RS76}
  L. Rothschild, and  E.M. Stein,
 \newblock {\it Hypoelliptic differential operators and nilpotent groups}
 \newblock  Acta Math. \textbf{137} (1976), 247--320.

\bibitem[RS11]{RS}
E. Russ, and Y. Sire, 
\newblock {\it Nonlocal Poincar\'e inequalities on Lie groups with polynomial
volume growth and Riemannian manifolds}, \newblock  Studia Math. \textbf{203} (2011), 105--127.

\bibitem[Wan02]{Wan02}
F. Y. Wang,
\newblock {\it Functional ineualities and spectrum estimates: the infinte measure case},
\newblock  J. Funct. Anal. \textbf{194} (2002), 288--310.


\end{thebibliography}
\end{document}